\documentclass[12pt,oneside,a4paper]{article}

\usepackage[english]{babel}
\usepackage{amsmath,amssymb,amsfonts}
\usepackage[gen,right]{eurosym}
\usepackage{textcomp,longtable,epsfig,url}
\usepackage{bbm}
\usepackage{graphicx}
\usepackage{subfigure}
\usepackage{hyperref}
\usepackage[intoc]{nomencl}
\usepackage[printonlyused]{acronym}
\usepackage{verbatim}
\usepackage{tabularx}
\usepackage{url}
\usepackage{color}
\usepackage{amssymb,amsmath}
\usepackage{setspace}
\usepackage{scrhack}
\usepackage{floatrow}
\usepackage{listings}
\usepackage{amsthm}
\usepackage{paralist}
\usepackage{xr}
\usepackage{graphicx}
\usepackage{listings}
\usepackage{booktabs}
\usepackage{flafter}
\usepackage{varwidth}
\usepackage[round]{natbib}
\usepackage[a4paper]{geometry}
\usepackage{enumitem}
\usepackage{titling}
\usepackage{algorithm2e}

\makeindex

\newtheorem{definition}{Definition}[section]

\newtheorem{prop}[definition]{Proposition}
\newtheorem{lemma}[definition]{Lemma}
\newtheorem{corollary}[definition]{Corollary}
\newtheorem{theorem}[definition]{Theorem}
\newtheorem{example}[definition]{Example}
\newtheorem{remark}[definition]{Remark}

\newcommand{\E}{{\mathbb E}}

\newcommand{\beqo}{\begin{eqnarray*}}
	\newcommand{\eeqo}{\end{eqnarray*}\noindent}
\newcommand{\beq}{\begin{eqnarray}}
	\newcommand{\eeq}{\end{eqnarray}\noindent}

\renewcommand{\P}{\mathbb{P}}
\newcommand{\R}{\mathbb{R}}
\newcommand{\N}{\mathbb{N}}

\newcommand{\x}{\mathbf{x}}
\newcommand{\y}{\mathbf{y}}

\title{Almost stochastic dominance via optimal transport}

\author{Alfred M\"uller\thanks{Department of Mathematics, University of Siegen, Germany. E-mail: \url{mueller@mathematik.uni-siegen.de}.}
\and Johannes Wiesel\thanks{Department of Mathematics, University of Copenhagen, Denmark. E-mail: \url{wiesel@math.ku.dk}.}}

\date{\today.\\[.3cm]
	}

\begin{document}
	\begin{titlingpage}
		\maketitle
		\begin{abstract}  We study parametric classes of almost stochastic dominance on general Polish
spaces as order relations for probability distributions with a parameter
$\gamma \in [0,1]$. Larger values of $\gamma$ correspond to weaker order
relations: $\gamma=0$ gives classical stochastic dominance $\le_{st}$, whereas
$\gamma=1$ gives a complete preorder based on comparison of expectations of a
fixed increasing function $g$. It is well known that $X \le_{st} Y$ can be
characterized by the existence of a solution to an optimal transport problem
with $\mathrm{OT}_c(X,Y)=0$ for a suitable cost function $c$. We generalize this
idea so that the best possible parameter $\gamma$ for almost stochastic
dominance can be determined from the solution of an optimal transport problem.
Using a generalization of the classical Kantorovich--Rubinstein duality theorem
to quasi-pseudo-metrics, we derive a dual characterization of the order in terms
of expectation comparisons for a parametric class of test functions.
Consequently, our relations are always transitive, in contrast to some other recent
approaches to almost stochastic dominance based on optimal transport. A natural
multivariate approach to almost stochastic dominance, based on classes of test
functions with bounds on partial derivatives, was recently introduced by
M\"uller et al.~(2025). We show that this approach is a special case of our
framework and derive the best possible parameters $\gamma$ for examples
considered there, as well as for other examples from the literature. We also
prove a robustness result showing that, under small perturbations of the
distributions in a Wasserstein-type metric related to the optimal transport
problem, the best possible $\gamma$ increases only slightly.

			{\raggedleft {\bf Keywords:} almost stochastic dominance, optimal transport, stochastic order, quasi-pseudo-metric, robustness}   \\[0.2cm]
					\end{abstract}
	\end{titlingpage}

	\pagestyle{plain}
	
	\onehalfspacing
	\allowdisplaybreaks

\section{Introduction}

The fundamental aim of classical stochastic dominance theory is to provide order relations between probability measures for comparing random quantities; see e.g.~\cite{MueSto:Wiley2002} and \cite{ShaSha:Springe2007} and the references therein for an overview. The most famous example of such an order relation is what is often called \textit{the stochastic order} in probability theory or also \textit{first-order stochastic dominance} in more applied literature. We will write $X \le_{st} Y$ for random elements in an arbitrary partially ordered Polish space, if one of the following two equivalent conditions holds: (i) $\E[u(X)] \le \E[u(Y)]$ for all increasing functions for which the expectations exist, (ii) there exists a coupling $\tilde{X} \overset{d}{=} X$ and $\tilde{Y}\overset{d}{=} Y$ with $\tilde{X} \le \tilde{Y}$ $\P$-a.s. These equivalent formulations explain why stochastic dominance is useful in decision theory, probability, and optimization: if $X \le_{st} Y$, then every rational decision maker who prefers more to less will prefer $Y$ to $X$. This economic interpretation can be found for the univariate case already in \cite{hadar1969}. The general result of the equivalence for arbitrary partially ordered Polish spaces is based on a theorem in \cite{strassen1965} and can be found e.g. in  \cite{kamae1977}. 

A drawback of this formulation is that exact stochastic dominance is often too demanding. Suppose, for instance, that a random payoff is compared with a fixed benchmark. If the random payoff can fall below the benchmark with positive probability, then first-order stochastic dominance of the random payoff over the benchmark is impossible. As another example, consider multivariate normal distributions $\mathbf{X} \sim N(\mu, \Sigma)$, $\mathbf{Y} \sim N(\mu', \Sigma')$ with values in $\mathbb{R}^d$. Then $\mathbf{X} \le_{st} \mathbf{Y}$ only holds if $\mu \le \mu'$ and 
$\Sigma = \Sigma'$, see \cite{Mue:AISM2001}. Clearly the requirement of having the exactly same covariance matrices is very restrictive. 

Parametric families of concepts of almost stochastic dominance with a parameter $\gamma \in [0,1]$ have been introduced to relax these restrictive requirements, where $\gamma = 0$ corresponds to $\le_{st}$ and larger parameters lead to weaker orderings. 
One route is to restrict the class of utility functions, excluding marginal utilities that are too extreme. In the univariate theory of \cite{leshno2002} and \cite{muller2017between}, the relevant classes control the ratio between small and large marginal utilities. 
These approaches have an important structural feature: for each fixed parameter they define genuine transitive order relations, not merely numerical indices of deviation from dominance.

On the real line, the same idea can be described geometrically. For integrable real-valued random variables $X$ and $Y$, let $F$ and $G$ be their distribution functions. The failure of $X\le_{st}Y$ is measured by one of the two one-sided areas between $F$ and $G$, while the compensating part is measured by the other. Using quantiles, these quantities are
\[
D^+(X,Y):=\int_0^1 (F^{-1}(u)-G^{-1}(u))_+\,du,
\qquad
D^-(X,Y):=\int_0^1 (G^{-1}(u)-F^{-1}(u))_+\,du
\]
where we have used the notation $x_+:=\max(x,0)$.
The condition for univariate $\gamma$-almost stochastic dominance can then be written as $D^+(X,Y)\le \gamma D^-(X,Y)$. In the special case where a random payoff is compared with a sure benchmark, this is a gain-loss comparison closely related to the Omega ratio of \cite{ShaKea:JPM2002}.

Optimal transport gives a natural language for these one-sided quantities. For the asymmetric cost $c(x,y)=(x-y)_+$, the violation term is
\[
\mathrm{OT}_c(X,Y):=\inf_{\tilde X\overset{d}{=}X,\ \tilde Y\overset{d}{=}Y}\E[c(\tilde X,\tilde Y)]=D^+(X,Y),
\]
where the infimum is taken over all couplings of $\tilde X$ and $\tilde Y$. The second equality holds as the monotone quantile coupling is optimal for convex costs on the real line; see \cite{cambanis1976}, and \cite{villani2009} for a general introduction to optimal transport. More generally, if $E$ is partially ordered and $c(x,y)=0$ precisely when $x\le y$, then, under the usual closedness and integrability assumptions, the coupling characterization of $X \le_{st} Y$ can be stated in equivalent terms as $\mathrm{OT}_c(X,Y) = 0$. This observation goes back at least to \cite{hansel1978}, who have shown the relation of the theorem in  \cite{strassen1965} to the Ford-Fulkerson theorem. This observation was also the starting point for the definition of the OT-based versions of multivariate almost stochastic dominance recently introduced by \cite{rioux2024}.

The route of defining OT-based versions of almost stochastic dominance has recently been pursued in several forms. The work of \cite{delbarrio2018a} uses Wasserstein-type quantities to assess variants of almost stochastic order, and \cite{rioux2024} formulates a multivariate transport-based criterion for almost stochastic dominance. Closely related statistical and applied formulations appear in \cite{alvarez2017}, and directional decompositions of Wasserstein-type distances are studied in \cite{resin2024}. These contributions show that optimal transport is a flexible way to measure deviations from stochastic dominance and to construct tests or diagnostics for such deviations.

However, measuring a deviation from stochastic dominance is not the same as defining an order relation. If one declares $X$ to be dominated by $Y$ whenever a transport ratio is below a fixed threshold, the resulting threshold relation need not be transitive. This problem already appears in one dimension for ratios based on squared Wasserstein-type costs, as in the transport versions of \cite{delbarrio2018a} and \cite{rioux2024}; see Example \ref{ex:wasserstein-nontransitive} in the Appendix. The same mechanism applies to more general costs generated by convex functions $h$ through $c(x,y)=h(x-y)$. Thus such ratios can be useful measures of distance from stochastic dominance, but they do not by themselves provide partial orders suitable for decision making, see e.g. \cite{alvarez2017} for a detailed discussion of this problem of non-transitivity of decisions based on such tests for stochastic dominance. The exceptional case is the linear absolute-value cost $h(x)=|x|$, where the threshold relation reduces to the classical univariate almost stochastic dominance order of \cite{leshno2002}.

This paper starts from the above distinction. Our goal is not only to quantify how far a pair of laws is from stochastic dominance, but to define an almost dominance relation that remains a partial order, where we can compute the best possible parameter by solving an optimal transportation problem. The modification is simple in the univariate case. Since
\[
\E[Y]-\E[X]=D^-(X,Y)-D^+(X,Y),
\]
the classical ratio $D^+(X,Y)/D^-(X,Y)$ can be rewritten as
\[
\frac{\mathrm{OT}_c(X,Y)}{\E[Y]-\E[X]+\mathrm{OT}_c(X,Y)}
\qquad\text{for } c(x,y)=(x-y)_+.
\]
The denominator is not a symmetric transport distance. It is the violation cost plus the compensating increase in a benchmark functional, here the mean.  This reformulation has the advantage that we only have to solve one optimal transport problem instead of two. This is the feature that survives in the general construction outlined in the main part of the paper and is responsible for transitivity.

We will generalize this idea to the multivariate setting, and even to general partially ordered Polish spaces. The test function characterization of the univariate concept introduced in \cite{leshno2002} can naturally be generalized to the multivariate setting. This has been done in \cite{muller2025multivariate}. They consider as the most natural extension for $\gamma \in [0,1]$ the class 
$\mathcal{U}_\gamma$ of continuously differentiable utilities $u:\mathbb{R}^d\to\mathbb{R}$ satisfying
\[
\gamma\le \frac{\partial u}{\partial x_i}(\x)\le 1
\qquad\text{for all } \x\in\mathbb{R}^d \text{ and } i=1,\dots,d.
\]
Their version of almost stochastic dominance with a parameter $\gamma \in [0,1]$ then holds between random vectors $\mathbf{X}$ and $\mathbf{Y}$, if 
\[
\E[u(\mathbf{X})]\le\E[u(\mathbf{Y})]  \quad\text{for all }u\in\mathcal{U}_\gamma.
\]
\cite{muller2025multivariate} derive sufficient conditions for $\gamma$, but these are far from the best possible $\gamma$ in concrete examples. 

One of the main aims of this paper is to show that the best possible $\gamma$ for this version of almost stochastic dominance can be derived by solving an appropriate optimal transportation problem and defining an equivalent ordering based on the solution of the OT problem. 

\subsection{Structure}

The remainder of the paper is organized as follows. Section 2 recalls quasi-pseudo-metrics, their induced orders, and the coupling characterization of first-order stochastic dominance. Section \ref{sec:main_results} introduces the best possible coefficient $\gamma^\ast$, proves the partial order, duality, and robustness results, and then specializes them to $\mathbb{R}^d$. Section 4 contains two numerical illustrations: day-by-day sunshine measurements for Rome and Siegen as considered in an example by \cite{muller2025multivariate},  and the bivariate Gaussian example of \cite{range2019first}. Section 5 contains the proofs of the main results. The Appendix collects the non-transitivity example for the Wasserstein-ratio criterion and a Banach-lattice example of quasi-pseudo-metrics that explains why we typically choose $p=1$ for the $L^p$-type distances used in our examples.

\subsection{Notation}

Throughout the paper, we fix a Polish space $E$ and denote the set of Borel probability measures on $E$ by $\mathcal{P}(E)$.
All random variables are defined on a joint probability space $(\Omega, \mathcal A, \P)$. For an $E$-valued random variable $X$, we write $\mathrm{Law}(X)$ for its law and set
$$
\mathcal{L}_f(E):=\{X:\E[|f(X)|]<\infty\}.
$$

For a nonnegative Borel measurable cost function $c:E\times E\to [0,\infty)$ and $E$-valued random variables $X,Y$, we write
\[
\mathrm{OT}_c(X,Y):= \inf_{\tilde X\overset{d}{=} X, \tilde Y\overset{d}{=}Y} \E[c(\tilde{X}, \tilde{Y})],
\]
whenever the right-hand side is well-defined. Here we have used $\overset{d}{=}$ to denote equality in law.
For a nonnegative cost function $c:E\times E\to [0,\infty)$ and $E$-valued random variables $X$ we set
\[
\mathcal{L}_c(E):=\{X:\exists x_0\in E \text{ s.t. } \E[c(X,x_0)+c(x_0,X)]<\infty\}.
\]

For $x,y\in \R$ we write $x\vee y:= \max(x,y)$ and $x\wedge y:=\min (x,y)$, as well as $x_+:=x\vee 0$ and $x_-:=(-x)_+$.

\section{Preliminaries}

In this section we collect some basic definitions, that we use throughout the paper. We start with certain generalizations of metrics like quasi-pseudo-metrics and their connection to partial orders as described e.g. in \cite{kelly1963} or \cite{gaba-kuenzi2015,gaba-kuenzi2016}.

\subsection{Metrics and orders}

\begin{definition} \label{def:qpm}
A \emph{quasi-pseudo-metric} is a mapping $d: E \times E \to [0,\infty)$ satisfying the following two properties:
\begin{enumerate}[label=(\roman*)]
\item $d(x,x) = 0$ for all $x \in E$,
\item  $d(x,z) \le d(x,y)  + d(y,z)$ for all  $x,y,z \in E$.
\end{enumerate}
The mapping $d$ is called a \emph{pseudo-metric}, if it satisfies in addition the property
\begin{enumerate}[resume*]
\item $d(x,y) = d(y,x)$ for all  $x,y \in E$,
\end{enumerate}
and the pseudo-metric is a \emph{metric}, if in addition
\begin{enumerate}[resume*]
\item $d(x,y) = 0$ if and only if $x=y$. 
\end{enumerate}
\end{definition}

We next recall the notion of an order on $E$.

\begin{definition} \label{def:order-relation}
For a \emph{relation} $R_\preceq \subseteq E \times E$ we also write $x \preceq y$, if $(x,y) \in R_\preceq$. The relation $\preceq$ is called a \emph{preorder} if it has the following two properties:
\begin{enumerate}[label=(\roman*)]
\item $x \preceq x$ for all $x \in E$,
\item  $x \preceq y$ and $y \preceq z$ implies $x \preceq z$ for all $x,y,z\in E$.
\end{enumerate}
The relation $\preceq$ is called a (partial) \emph{order relation}, if it fulfills in addition the property that
\begin{enumerate}[resume*]
\item $x \preceq y$ and $y \preceq x$ implies $x = y$.
\end{enumerate}
Given a quasi-pseudo-metric $d$ one can define a preorder $\le_d$ via
\begin{align}\label{eq:preorder}
x \le_d y \quad \Longleftrightarrow\quad d(x,y) = 0.
\end{align}
\end{definition}

\begin{remark}
Given a quasi-pseudo-metric $d^+$, define the dual quasi-pseudo-metric $d^-(x,y) := d^+(y,x)$. Next define $\bar{d}(x,y):= d^+(x,y) + d^-(x,y)$, which is a pseudo-metric and indeed a metric, if the corresponding preorder $\le_{d^+}$ is an order relation. The simplest example for this is the case of the classical Euclidean distance of the real numbers 
$$
d(x,y) = |x-y| = d^+(x,y) + d^-(x,y) = \bar{d}(x,y)
$$
with $d^+(x,y) = (x-y)_+$ so that the corresponding partial order is the classical ordering of real numbers: 
$$
x \le_{d^+} y \mbox{ if and only if } x \le y.
$$
A natural extension of this is given by a general Banach lattice as described e.g.~in the book of \cite{schaefer1974}, see Example \ref{ex:schaefer}.
\end{remark}

\subsection{First-order stochastic dominance and Strassen's theorem}

Throughout this section we consider a partially ordered Polish space $(E, \le)$ with closed order relation.
\begin{definition}
For two $E$-valued random variables $X,Y$ we define
\begin{align*}
X \le_{st} Y \quad \Longleftrightarrow \quad \E[f(X)] \le \E[f(Y)] \ \ \text{for all bounded measurable increasing }f:E \to \R.
\end{align*}
\end{definition}

The following result can be found as part of Theorem 1 in \cite{kamae1977} and is an immediate consequence of a famous result of \cite{strassen1965}. 

\begin{prop} \label{prop:kamae}
The following conditions are equivalent for random variables $X,Y$ with values in $(E,\le)$:
\begin{enumerate}[label=(\roman*)]
\item $X \le_{st} Y$,
\item there exists a pair of $E$-valued random variables $\tilde X,\tilde Y$ with $\tilde X \overset{d}{=} X$, $\tilde Y\overset{d}{=} Y$, such that $\tilde X \le \tilde Y$ almost surely.
\end{enumerate}
\end{prop}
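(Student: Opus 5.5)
The implication (ii)$\Rightarrow$(i) is immediate, and the real work is (i)$\Rightarrow$(ii). For (ii)$\Rightarrow$(i), take a bounded measurable increasing $f$ and a coupling with $\tilde X\le\tilde Y$ almost surely. Then $f(\tilde X)\le f(\tilde Y)$ almost surely, and taking expectations gives $\E[f(X)]=\E[f(\tilde X)]\le \E[f(\tilde Y)]=\E[f(Y)]$.

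For (i)$\Rightarrow$(ii) I would invoke Strassen's theorem in its marginal-constraint form. Let $\mu=\mathrm{Law}(X)$, $\nu=\mathrm{Law}(Y)$ and $R=\{(x,y):x\le y\}$, which is closed by assumption. Strassen's theorem states the following: for tight Borel probability measures $\mu,\nu$ on a Polish space and a closed set $R\subseteq E\times E$, a coupling $\pi$ of $\mu,\nu$ with $\pi(R)=1$ exists if and only if
\[
\mu(A)\le \nu(R(A)) \quad\text{for all closed } A\subseteq E,\qquad R(A):=\{y:\exists x\in A,\ (x,y)\in R\}.
\]
When $R$ is the order relation, $R(A)=\uparrow A$ is the upper set generated by $A$. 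Once this condition is verified, the coupling $\pi$ yields $(\tilde X,\tilde Y)\sim\pi$ on a suitable probability space with $\tilde X\le\tilde Y$ almost surely. If necessary one enlarges $(\Omega,\mathcal A,\P)$, or realizes the pair through a Borel isomorphism with a uniform variable.

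To check the condition, fix a closed $A$ and use $f=\mathbbm{1}_{\uparrow A}$. This function is increasing and bounded. It is measurable provided $\uparrow A$ is Borel, and then (i) gives $\mu(\uparrow A)\le\nu(\uparrow A)$. Since $A\subseteq\uparrow A$, we get $\mu(A)\le\mu(\uparrow A)\le \nu(\uparrow A)$, which is exactly Strassen's condition.

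The main obstacle is the measurability of $\uparrow A$. It is the projection of the closed set $R\cap(A\times E)$, hence analytic and therefore universally measurable, but it need not be Borel. I would handle this in one of two ways. The first is to extend (i) to universally measurable increasing indicators by sandwiching: choose Borel sets $B\subseteq\uparrow A\subseteq B'$ of equal $\mu+\nu$ measure. This needs care, because $B$ itself need not be increasing. The cleaner route is to use inner regularity. Approximate $A$ from inside by compact sets $K\subseteq A$, for which $\uparrow K$ is closed: it is the projection of the closed set $R\cap(K\times E)$ along the compact factor, and a closedness argument using subsequences in $K$ shows it is closed. Applying (i) to $\mathbbm{1}_{\uparrow K}$ gives $\mu(K)\le\nu(\uparrow K)\le\nu(\uparrow A)$ in outer measure, and letting $\mu(K)\uparrow\mu(A)$ finishes the verification. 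This way Strassen's criterion only needs to be tested on compact sets, which is the version in \cite{kamae1977}.
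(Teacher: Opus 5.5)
Your proposal is correct and follows the route the paper indicates. The paper gives no argument of its own and defers to Theorem 1 of \cite{kamae1977} as a consequence of Strassen's theorem, and your reduction to Strassen's marginal condition is what that citation amounts to; testing the condition on compact $K$, where $\uparrow K$ is closed and $\mathbf{1}_{\uparrow K}$ is therefore an admissible test function, and then using inner regularity, correctly avoids the possible non-Borel measurability of $\uparrow A$ for closed $A$.
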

This can easily be translated into a statement in terms of optimal transport. 

\begin{prop} \label{prop:kamae-transport}
Let  $c:E \times E \to [0,\infty)$ be a lower semicontinuous function satisfying $c(x,y) = 0 \Leftrightarrow x \le y$. Then the following statements are equivalent for $X,Y\in \mathcal{L}_c(E)$:
\begin{enumerate}[label=(\roman*)]
\item $X \le_{st} Y$,
\item  $\mathrm{OT}_c(X,Y) = 0$.
\end{enumerate}
\end{prop}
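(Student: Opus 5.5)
The plan is to reduce both directions to Proposition \ref{prop:kamae} and to use the existence of optimal couplings for lower semicontinuous costs.

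\emph{(i) $\Rightarrow$ (ii).} Assume $X \le_{st} Y$. Proposition \ref{prop:kamae} gives a coupling $\tilde X \overset{d}{=} X$, $\tilde Y \overset{d}{=} Y$ with $\tilde X \le \tilde Y$ almost surely. By hypothesis $c(x,y)=0$ whenever $x\le y$, so $c(\tilde X,\tilde Y)=0$ almost surely. Hence $\E[c(\tilde X,\tilde Y)]=0$, and since $c\ge 0$ we obtain $\mathrm{OT}_c(X,Y)=0$. This direction does not need the assumption $X,Y\in\mathcal{L}_c(E)$.

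\emph{(ii) $\Rightarrow$ (i).} Assume $\mathrm{OT}_c(X,Y)=0$. The main step is to show that the infimum is attained.
\begin{itemize}
\item Since $c$ is lower semicontinuous and bounded below by $0$, and $E$ is Polish, the standard existence theorem for optimal transport applies (see e.g.\ \cite{villani2009}, Theorem 4.1).
\item Concretely, the set $\Pi(\mathrm{Law}(X),\mathrm{Law}(Y))$ of couplings is tight, because both marginals are tight. It is also weakly closed, so it is weakly compact by Prokhorov's theorem.
\item The functional $\pi\mapsto\int c\,d\pi$ is weakly lower semicontinuous for nonnegative lower semicontinuous $c$. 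To see this, write $c$ as the increasing limit of bounded continuous functions and use monotone convergence.
\end{itemize}
Hence a minimizing coupling $\pi^*$ exists, with $\int c\,d\pi^*=0$. The membership $X,Y\in\mathcal{L}_c(E)$ only ensures finiteness and is harmless here.

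Since $c\ge0$ and $\int c\,d\pi^*=0$, we get $c(\tilde X,\tilde Y)=0$ almost surely for $(\tilde X,\tilde Y)\sim\pi^*$. By the hypothesis $c(x,y)=0\Leftrightarrow x\le y$, this means $\tilde X\le\tilde Y$ almost surely. Proposition \ref{prop:kamae} then yields $X\le_{st}Y$.

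I expect the attainment argument to be the only real obstacle, and it amounts to citing the classical existence result. Note that lower semicontinuity of $c$ is exactly what makes it work. Without it, an infimum of $0$ could be approached by couplings whose weak limit charges the set $\{c>0\}$.
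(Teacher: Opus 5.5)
Your proof is correct and takes the route the paper intends. The paper gives no separate proof and only remarks that the statement is an easy translation of Proposition~\ref{prop:kamae}. Your argument is exactly that translation: (i)$\Rightarrow$(ii) via the coupling with $\tilde X\le\tilde Y$ almost surely, and (ii)$\Rightarrow$(i) via an optimal coupling, which exists because $c$ is lower semicontinuous, followed by $c(\tilde X,\tilde Y)=0$ almost surely. You also make explicit the one nontrivial step the paper leaves implicit, namely that the infimum is attained, and you are right that $X,Y\in\mathcal{L}_c(E)$ is not actually needed.
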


\section{A new coefficient for almost stochastic dominance} \label{sec:main_results}

Let us fix a lower semicontinuous quasi-pseudo-metric $c$ on $E$, take a Borel measurable function $g:E\to \R$ and write
\begin{align*}
\mathcal{L}_{c,g}(E)=\{X:\exists x_0\in E \text{ s.t. } \E[c(X,x_0)+c(x_0,X)+|g(X)|]<\infty\}
\end{align*}
for the set of $E$-valued random variables for which $c$ and $g$ are integrable.
We are now in a position to define the central notion of this paper.

\begin{definition}
For $X,Y\in \mathcal{L}_{c,g}(E)$ with $\E[g(X)]\le \E[g(Y)]$ we define 
\begin{align}\label{eq:gamma_star}
\gamma^\ast(X,Y;g)  := \frac{ \mathrm{OT}_c(X,Y)}{\E[g(Y)]-\E[g(X)]+\mathrm{OT}_c(X,Y)},
\end{align}
with the convention $0/0=0$.
\end{definition}

\begin{definition}\label{def:gamma_order}
Let $\gamma\in [0,1]$. For $X,Y\in \mathcal{L}_{c,g}(E)$ we say that $Y$ dominates $X$ in the sense of $(\gamma,g)$-almost stochastic dominance (denoted by $X \le_{\gamma,g} Y$) if $\E[g(X)]\le \E[g(Y)]$ and $\gamma^\ast(X,Y;g)\le \gamma$.
\end{definition}

\subsection{Main results}

We now state our three main results. The first one states that $\le_{\gamma,g}$ defines an order relation for $\gamma<1$ on laws of random variables.

\begin{theorem}\label{thm:main1}
Assume that the function $$ \mathcal{L}_{c,g}(E) \times  \mathcal{L}_{c,g}(E) \ni (X,Y)\mapsto \mathrm{OT}_c(X,Y)+\mathrm{OT}_c(Y,X)$$ is a metric modulo equality in distribution and let $\gamma<1$. Then $\le_{\gamma,g}$ is a partial order on $E$-valued random variables modulo equality in distribution.
\end{theorem}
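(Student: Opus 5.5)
Reflexivity, antisymmetry and transitivity are handled separately. The key step is to rewrite the defining inequality $\gamma^\ast(X,Y;g)\le\gamma$ in linear form. Write $O(X,Y):=\mathrm{OT}_c(X,Y)$ and $\Delta(X,Y):=\E[g(Y)]-\E[g(X)]$. If $\Delta(X,Y)\ge 0$, then for $\gamma\in[0,1)$
\[
X\le_{\gamma,g}Y \iff O(X,Y)\le \gamma\bigl(\Delta(X,Y)+O(X,Y)\bigr)\iff (1-\gamma)\,O(X,Y)\le \gamma\,\Delta(X,Y).
\]
This also covers the convention $0/0=0$, since then both sides vanish. Setting $\lambda:=\gamma/(1-\gamma)\in[0,\infty)$, the relation reads
\[
O(X,Y)\le \lambda\bigl(\E[g(Y)]-\E[g(X)]\bigr).
\]
The side condition $\E[g(X)]\le\E[g(Y)]$ is then automatic, because $O\ge 0$ forces $\Delta\ge0$ whenever $\lambda>0$. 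For $\lambda=0$ the relation reduces to $O(X,Y)=0$, and we keep the side condition explicitly. All quantities depend only on laws, so the relation is well defined modulo equality in distribution.

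\emph{Reflexivity.} The diagonal coupling $\tilde X=\tilde Y=X$ gives $O(X,X)\le \E[c(X,X)]=0$ and $\Delta(X,X)=0$.

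\emph{Antisymmetry.} Suppose $X\le_{\gamma,g}Y$ and $Y\le_{\gamma,g}X$. Both side conditions give $\E g(X)=\E g(Y)$, so $\Delta(X,Y)=0$. The linear form then yields $O(X,Y)\le0$ and $O(Y,X)\le0$, so $O(X,Y)+O(Y,X)=0$. Since this symmetrized quantity is a metric modulo equality in law by assumption, $X\overset{d}{=}Y$. This is exactly where $\gamma<1$ is used: for $\gamma=1$ the inequality gives no information about $O$.

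\emph{Transitivity.} This is the main step. We need the triangle inequality
\[
O(X,Z)\le O(X,Y)+O(Y,Z)
\]
for laws in $\mathcal L_{c,g}(E)$. Take near-optimal couplings $\pi_1$ of $(X,Y)$ and $\pi_2$ of $(Y,Z)$. Glue them along the common $Y$-marginal using the gluing lemma, which works on Polish spaces via disintegration. This produces a triple $(\tilde X,\tilde Y,\tilde Z)$ with the prescribed pairwise laws. The triangle inequality for the quasi-pseudo-metric $c$ gives
\[
O(X,Z)\le\E[c(\tilde X,\tilde Z)]\le \E[c(\tilde X,\tilde Y)]+\E[c(\tilde Y,\tilde Z)]\le O(X,Y)+O(Y,Z)+2\varepsilon .
\]
Integrability is ensured by membership in $\mathcal L_c(E)$ together with the triangle inequality through $x_0$; this makes all $O$-values finite, and we then let $\varepsilon\to0$.

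Adding the two linear inequalities now gives
\[
O(X,Z)\le \lambda\bigl(\Delta(X,Y)+\Delta(Y,Z)\bigr)=\lambda\,\Delta(X,Z),
\]
since $\Delta$ telescopes. The side condition $\Delta(X,Z)\ge0$ also follows by summing. Hence $X\le_{\gamma,g}Z$. The only real obstacle is the gluing argument and the finiteness and measurability details. Lower semicontinuity of $c$ is not needed for this step, because we work with $\varepsilon$-optimal rather than optimal couplings.
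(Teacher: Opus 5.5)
Your proof is correct and follows the paper's argument essentially step for step. You rewrite $\gamma^\ast\le\gamma$ as $(1-\gamma)\mathrm{OT}_c(X,Y)\le\gamma\bigl(\E[g(Y)]-\E[g(X)]\bigr)$, obtain transitivity by adding two such inequalities via the gluing-lemma triangle inequality for $\mathrm{OT}_c$ (the paper's Lemma~\ref{lem:triangle}), and obtain antisymmetry from $\E[g(X)]=\E[g(Y)]$, $\gamma<1$ and the metric assumption on $\mathrm{OT}_c(X,Y)+\mathrm{OT}_c(Y,X)$. Your explicit treatment of the $0/0$ convention and of the $\gamma=0$ side condition is extra care that the paper leaves implicit.
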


Obviously the order $\le_{\gamma,g}$ becomes weaker for increasing $\gamma$, i.e.~for $\gamma_1 < \gamma_2$ we obtain 
$$
X \le_{\gamma_1,g} Y \ \Longrightarrow \ X \le_{\gamma_2,g} Y.
$$
If we assume that $g$ is increasing with respect to $\le_c$, then we get a parametric family of orders interpolating between stochastic dominance induced by $\le_c$ for $\gamma=0$ and $\E[g(X)]\le \E[g(Y)]$ for $\gamma=1$. Thus, for $\gamma=1$, we only obtain a preorder in general.

Our second main result states that establishing $X\le_{\gamma,g} Y$ is equivalent to evaluating $X$ and $Y$ on a class of test functions.

\begin{theorem}\label{thm:main2}
For all $\gamma \in [0,1]$ and all $X,Y\in \mathcal{L}_{c,g}(E)$ we have
\begin{align*}
X\le_{\gamma,g} Y \qquad \Longleftrightarrow \qquad \E[f(X)] \le \E[f(Y)] \quad \text{ for all } f \in \mathcal{F}_{\gamma,g}\cup\{g\},
\end{align*}
where $\mathcal{F}_{\gamma,g}:=\{(1-\gamma) f + \gamma g: f(x)-f(y)\le c(x,y) \,\, \text{ for all } x,y\in E\}$.
\end{theorem}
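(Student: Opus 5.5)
The plan is to reduce everything to the Kantorovich--Rubinstein duality for the lower semicontinuous quasi-pseudo-metric $c$. This duality is the generalization mentioned in the introduction, and I take it to read
\[
\mathrm{OT}_c(X,Y)=\sup\Big\{\E[f(X)]-\E[f(Y)]:\ f(x)-f(y)\le c(x,y)\ \text{for all } x,y\in E\Big\}
\]
for $X,Y\in\mathcal{L}_{c,g}(E)$. Here every admissible $f$ is integrable, because $|f(x)-f(x_0)|\le c(x,x_0)+c(x_0,x)$. Write $\Lambda:=\{f: f(x)-f(y)\le c(x,y)\}$ and $\Delta:=\E[g(Y)]-\E[g(X)]$.

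The first step is to rewrite the definition in affine form. I claim that $X\le_{\gamma,g}Y$ holds if and only if $\Delta\ge 0$ and
\[
(1-\gamma)\,\mathrm{OT}_c(X,Y)\le\gamma\,\Delta .
\]
To see this, assume $\Delta\ge0$. If the denominator $\Delta+\mathrm{OT}_c(X,Y)$ is positive, then $\gamma^\ast\le\gamma$ is equivalent to $\mathrm{OT}_c\le\gamma(\Delta+\mathrm{OT}_c)$, which is exactly the displayed inequality. If the denominator vanishes, then both $\Delta$ and $\mathrm{OT}_c$ are $0$ by nonnegativity. In that case the convention $0/0=0$ gives $\gamma^\ast=0\le\gamma$, and the displayed inequality holds trivially as well. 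The case $\gamma=1$ is covered too: there the condition reduces to $\Delta\ge0$, matching $\gamma^\ast\le1$, which always holds.

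The second step is the direction ($\Leftarrow$). Taking $f=g$ in the test class gives $\Delta\ge0$. For any $f\in\Lambda$, the hypothesis applied to $(1-\gamma)f+\gamma g$ gives
\[
(1-\gamma)\big(\E[f(X)]-\E[f(Y)]\big)\le\gamma\Delta .
\]
Taking the supremum over $f\in\Lambda$ and using the duality yields $(1-\gamma)\mathrm{OT}_c(X,Y)\le\gamma\Delta$, so the first step gives $X\le_{\gamma,g}Y$.

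The third step is the direction ($\Rightarrow$). The definition gives $\Delta\ge0$, which handles the test function $g$. For $f\in\Lambda$, I would use only the easy half of the duality: for any coupling $(\tilde X,\tilde Y)$,
\[
\E[f(X)]-\E[f(Y)]=\E[f(\tilde X)-f(\tilde Y)]\le\E[c(\tilde X,\tilde Y)],
\]
and minimizing over couplings gives $\E[f(X)]-\E[f(Y)]\le\mathrm{OT}_c(X,Y)$. Hence
\[
\E\big[((1-\gamma)f+\gamma g)(Y)\big]-\E\big[((1-\gamma)f+\gamma g)(X)\big]\ge-(1-\gamma)\mathrm{OT}_c+\gamma\Delta\ge0 .
\]

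The main obstacle is the nontrivial inequality in the duality, namely that the supremum over $\Lambda$ actually attains $\mathrm{OT}_c$. This is needed only in the second step. I would invoke the paper's Kantorovich--Rubinstein theorem for quasi-pseudo-metrics, which is promised later. If I had to prove it myself, I would start from the general Kantorovich duality for lower semicontinuous costs, $\mathrm{OT}_c=\sup\{\E[\varphi(X)]+\E[\psi(Y)]:\varphi(x)+\psi(y)\le c(x,y)\}$. I would then replace $\varphi$ by its $c$-transform $f(x)=\inf_y\{c(x,y)-\psi(y)\}$. The triangle inequality for $c$ gives $f(x)-f(x')\le c(x,x')$, so $f\in\Lambda$. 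Using $c(y,y)=0$ gives $f(y)\le-\psi(y)$, so $\psi\le -f$. The remaining technical point is measurability and integrability of $f$; I would handle it by approximating $\psi$ and using the moment condition in $\mathcal{L}_{c,g}(E)$.
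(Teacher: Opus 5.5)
Your proposal is correct and follows essentially the paper's route. The paper also derives $\mathrm{OT}_c(X,Y)=\sup\{\E[f(X)-f(Y)]: f\in\mathcal{L}_1(E,c)\}$ from the general Kantorovich duality via $c$-convexity and $c$-transforms, then rewrites $\gamma^\ast\le\gamma$ as $(1-\gamma)\mathrm{OT}_c(X,Y)\le\gamma(\E[g(Y)]-\E[g(X)])$ to read off the test-function characterization; your explicit treatment of the $0/0$ convention and your remark that the forward implication needs only weak duality are small refinements of that argument.
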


Lastly we have the following robustness result for $\le_{\gamma,g}$. For the special case of univariate almost stochastic dominance as introduced by \cite{leshno2002} a similar robustness result has been shown in \cite{muller2024smps}. 

\begin{theorem}\label{thm:main3}
Assume that $\gamma\in [0,1]$, $X\le_{\gamma,g} Y$ for $X,Y\in \mathcal{L}_{c,g}(E)$ with $\E[g(X)]< \E[g(Y)]$, and that there exists a metric $d$ on $E$ such that $c\le d$ and
\[
|\E[g(\hat X)]-\E[g(\hat Y)]|\le \mathrm{OT}_d(\hat X,\hat Y)\quad \text{ for all } \hat X,\hat Y\in \mathcal{L}_g(E).
\]
If $\mathrm{OT}_d(Y,\tilde Y)\le \epsilon \le \E[g(Y)]-\E[g(X)]$ for some $\tilde{Y}\in \mathcal{L}_{c,g}(E)$, then we have
\begin{align*}
\gamma^\ast(X,\tilde Y;g)\le  \tilde{\gamma} :=  \gamma+\frac{(1-\gamma)\epsilon}{\E[g(Y)]-\E[g(X)]}.
\end{align*}
Analogously, if $\mathrm{OT}_d(X,\tilde X)\le \epsilon \le \E[g(Y)]-\E[g(X)]$ for some $\tilde X\in \mathcal{L}_{c,g}(E)$, then we have $\gamma^\ast(\tilde X, Y;g)\le  \tilde{\gamma}$ for the same $\tilde{\gamma}$.
\end{theorem}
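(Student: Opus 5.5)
Write $\Delta:=\E[g(Y)]-\E[g(X)]>0$, $O:=\mathrm{OT}_c(X,Y)$ and $\tilde\Delta:=\E[g(\tilde Y)]-\E[g(X)]$. The first step is to rewrite the condition $\gamma^\ast\le\gamma$ in linear form. For $\E[g(X)]\le\E[g(Y)]$ and $\gamma\in[0,1]$, the condition $\gamma^\ast(X,Y;g)\le\gamma$ is equivalent to
\[
(1-\gamma)\,\mathrm{OT}_c(X,Y)\le\gamma\bigl(\E[g(Y)]-\E[g(X)]\bigr).
\]
This follows by clearing the nonnegative denominator; the $0/0$ case is trivial. So the hypothesis reads $(1-\gamma)O\le\gamma\Delta$. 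The goal is then to show $\tilde\Delta\ge0$ and $(1-\tilde\gamma)\,\mathrm{OT}_c(X,\tilde Y)\le\tilde\gamma\,\tilde\Delta$.

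Next I bound the two perturbed quantities.
\begin{itemize}
\item The assumed Lipschitz-type property of $g$ with respect to $\mathrm{OT}_d$ gives $\tilde\Delta\ge\Delta-\epsilon\ge0$. Hence $\E[g(X)]\le\E[g(\tilde Y)]$, and $\gamma^\ast(X,\tilde Y;g)$ is defined.
\item For the transport cost I use the triangle inequality for $\mathrm{OT}_c$, i.e.\ $\mathrm{OT}_c(X,\tilde Y)\le\mathrm{OT}_c(X,Y)+\mathrm{OT}_c(Y,\tilde Y)$. This holds because $c$ is a quasi-pseudo-metric: glue an (almost) optimal coupling of $(X,Y)$ with one of $(Y,\tilde Y)$ along $Y$ (gluing lemma on Polish spaces) and apply $c(x,z)\le c(x,y)+c(y,z)$ pointwise. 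Since $c\le d$, this gives $\mathrm{OT}_c(X,\tilde Y)\le O+\epsilon$.
\end{itemize}

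It then suffices to check
\[
(1-\tilde\gamma)(O+\epsilon)\le\tilde\gamma(\Delta-\epsilon).
\]
Both the left side decreasing in $\tilde\gamma$ and the right side using the lower bound $\Delta-\epsilon\le\tilde\Delta$ are legitimate because $\tilde\gamma\ge0$ and $1-\tilde\gamma\ge0$; note $\tilde\gamma\le\gamma+(1-\gamma)=1$ since $\epsilon\le\Delta$. The displayed inequality is equivalent to $O+\epsilon\le\tilde\gamma(O+\Delta)$. Substituting $\tilde\gamma=\gamma+(1-\gamma)\epsilon/\Delta$, the right side is
\[
\gamma(O+\Delta)+(1-\gamma)\epsilon\,\frac{O+\Delta}{\Delta}\ \ge\ \gamma O+\gamma\Delta+(1-\gamma)\epsilon .
\]
Using $\gamma\Delta\ge(1-\gamma)O$, this is at least $O+(1-\gamma)\epsilon$. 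That still falls short of $O+\epsilon$ by $\gamma\epsilon$, so I keep the term $(1-\gamma)\epsilon O/\Delta$ and need $(1-\gamma)O/\Delta\ge\gamma$. This is the reverse inequality and is not available in general.

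This is the main obstacle: the naive chain of estimates loses a $\gamma\epsilon$. I would fix it by not using the crude bound on both sides at once. The key point is that $\mathrm{OT}_c(X,\tilde Y)$ and $\tilde\Delta$ move together: shifting mass from $Y$ to $\tilde Y$ raises the violation by at most $\epsilon$ but lowers $g$ by at most the same amount. Concretely, I would work with the sum $S(\tilde Y):=\mathrm{OT}_c(X,\tilde Y)+\tilde\Delta$. By Theorem~\ref{thm:main2} with $\gamma=0$ (Kantorovich duality for the quasi-pseudo-metric), $S$ is a supremum of $\E[f(\tilde Y)-f(X)]+\tilde\Delta$ over $c$-Lipschitz $f$. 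Combining this dual representation with $c\le d$ and the $g$-bound should give $S(\tilde Y)\ge O+\Delta-\epsilon$ or $\ge O+\Delta$, depending on the chosen $f$. Writing $\gamma^\ast=1-\tilde\Delta/S$ and using $\tilde\Delta\ge\Delta-\epsilon$ together with a lower bound $S\le O+\Delta+\ldots$, I would then verify $1-\tilde\Delta/S\le\tilde\gamma$ by direct computation. The case of perturbing $X$ instead of $Y$ is symmetric, using $\mathrm{OT}_c(\tilde X,Y)\le\mathrm{OT}_c(\tilde X,X)+\mathrm{OT}_c(X,Y)$.
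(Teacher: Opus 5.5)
Your setup is correct and matches the paper's: the linearization $(1-\gamma)\,\mathrm{OT}_c\le\gamma\bigl(\E[g(Y)]-\E[g(X)]\bigr)$, the bound $\tilde\Delta\ge\Delta-\epsilon$ from the $g$-condition, and $\mathrm{OT}_c(X,\tilde Y)\le O+\epsilon$ from Lemma~\ref{lem:triangle} and $c\le d$. These reduce everything to $O+\epsilon\le\tilde\gamma(O+\Delta)$. You then declare this inequality unavailable and switch to a duality sketch that is not an argument. It says $S$ ``should give'' one of two bounds ``depending on the chosen $f$''. Lower bounds on $S$ also point the wrong way: you need $\tilde\Delta/S$ bounded below, i.e.\ an upper bound on $\mathrm{OT}_c(X,\tilde Y)$ and a lower bound on $\tilde\Delta$, which you already had. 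As written the proof is therefore incomplete.

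The obstacle is an algebra slip, not a real one. When you replace $\gamma\Delta$ by $(1-\gamma)O$, you throw away the slack $\gamma\Delta-(1-\gamma)O$. That slack is large precisely when $O$ is small, which is exactly when the retained term $(1-\gamma)\epsilon O/\Delta$ is too small to cover $\gamma\epsilon$. Expanding directly,
\[
\tilde\gamma(O+\Delta)-(O+\epsilon)=\gamma(\Delta-\epsilon)-(1-\gamma)O\Bigl(1-\frac{\epsilon}{\Delta}\Bigr)=\Bigl(1-\frac{\epsilon}{\Delta}\Bigr)\bigl(\gamma\Delta-(1-\gamma)O\bigr)\ge 0,
\]
since $\epsilon\le\Delta$ and $(1-\gamma)O\le\gamma\Delta$ is your hypothesis. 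So the required inequality is just the hypothesis multiplied by $1-\epsilon/\Delta$, and your first chain closes. The case of perturbing $X$ goes through identically, using $\mathrm{OT}_c(\tilde X,Y)\le\epsilon+O$ and $\E[g(Y)]-\E[g(\tilde X)]\ge\Delta-\epsilon$.

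The paper avoids the slip by ordering the estimates differently. It uses that $t\mapsto t/(a+t)$ is increasing in $t$ and decreasing in $a$. This lets it replace $\mathrm{OT}_c(X,Y)$ by its largest admissible value $\gamma\Delta/(1-\gamma)$ before combining, giving the bound $\bigl(\gamma\Delta/(1-\gamma)+\epsilon\bigr)/\bigl(\Delta-\epsilon+\gamma\Delta/(1-\gamma)+\epsilon\bigr)$. The two $\epsilon$'s in the denominator cancel, and the result is exactly $\tilde\gamma$. This is consistent with the identity above: that expression is linear and nonincreasing in $O$, so checking the extreme value $O=\gamma\Delta/(1-\gamma)$ suffices. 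No duality is needed anywhere in this theorem.
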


Note that the condition $|\E[g(\hat X)]-\E[g(\hat Y)]|\le \mathrm{OT}_d(\hat X,\hat Y)$ holds for all $g:E\to \R$ with $g(x)-g(y) \le c(x,y)$ $\forall x,y\in E$. The condition $c\le d$ is not a severe restriction. Indeed, for any metric $d'$ we can define
\begin{align*}
d(x,y) := d'(x,y)\vee c(x,y)\vee c(y,x)\qquad x,y\in E
\end{align*}
to obtain a metric satisfying $c\le d$.

\subsection{Almost stochastic dominance on $\R^d$}

We now specialize to $E=\R^d$ and consider the following special case. 
\begin{align}\label{eq:Rd}
c(\x,\y):= \sum_{i=1}^d (x_i-y_i)_+, \quad g(\x):=\sum_{i=1}^d x_i \quad \text{and}\quad d(\x,\y):=  \sum_{i=1}^d |x_i-y_i|,
\end{align}
and recall $\le_{\gamma,g}$ from Definition \ref{def:gamma_order}.
For this metric $d$, write
\[
\mathcal{L}_d(E):=\{\mathbf{X}:\E[d(\mathbf{0},X)]<\infty\}, 
\]
which is simply the set of all random vectors with finite means for all components. We have the following:

\begin{corollary}\label{cor:Rd-specialization}
If $c,g$ and $d$ are chosen according to \eqref{eq:Rd}, then Theorems \ref{thm:main1}-\ref{thm:main3} hold and
\begin{align*}
\mathcal{L}_{c,g}(E) = \mathcal{L}_d(E).
\end{align*}
\end{corollary}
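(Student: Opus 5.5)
The plan is to check, one by one, the hypotheses of Theorems \ref{thm:main1}--\ref{thm:main3} for the choices \eqref{eq:Rd}, after first identifying the integrability class.

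\textbf{Step 1: the integrability class.} Since $c(\x,\mathbf{0})+c(\mathbf{0},\x)=\sum_i\bigl((x_i)_++(x_i)_-\bigr)=d(\x,\mathbf{0})$ and $|g(\x)|\le d(\x,\mathbf{0})$, the choice $x_0=\mathbf{0}$ gives $\mathcal{L}_d(E)\subseteq\mathcal{L}_{c,g}(E)$. Conversely, take any $x_0$ with $\E[c(X,x_0)+c(x_0,X)]<\infty$. Then $\E[d(X,x_0)]<\infty$, and the triangle inequality $d(X,\mathbf{0})\le d(X,x_0)+d(x_0,\mathbf{0})$ gives $X\in\mathcal{L}_d(E)$. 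Hence $\mathcal{L}_{c,g}(E)=\mathcal{L}_d(E)$.

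\textbf{Step 2: standing assumptions on $c$.} The function $c$ is continuous and nonnegative. It satisfies $c(\x,\x)=0$, and the triangle inequality holds coordinatewise because $(a-b)_+\le(a-e)_++(e-b)_+$. So $c$ is a lower semicontinuous quasi-pseudo-metric. Moreover $c(\x,\y)=0$ if and only if $\x\le\y$ componentwise, a closed partial order. With these facts Theorem \ref{thm:main2} applies directly.

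\textbf{Step 3: hypotheses of Theorem \ref{thm:main3}.} Clearly $c\le d$ and $d$ is a metric. The required inequality $|\E[g(\hat X)]-\E[g(\hat Y)]|\le\mathrm{OT}_d(\hat X,\hat Y)$ follows from the remark after Theorem \ref{thm:main3}, since $g(\x)-g(\y)=\sum_i(x_i-y_i)\le c(\x,\y)$. One can also see it directly: for any coupling, $|\E[g(\tilde X)-g(\tilde Y)]|\le\E[d(\tilde X,\tilde Y)]$, and one takes the infimum over couplings.

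\textbf{Step 4: the metric hypothesis of Theorem \ref{thm:main1} (main obstacle).} We must show that $D(X,Y):=\mathrm{OT}_c(X,Y)+\mathrm{OT}_c(Y,X)$ is a metric on laws in $\mathcal{L}_d(E)$.

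Finiteness and symmetry are immediate, and $D(X,X)=0$ follows from the diagonal coupling.

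For the triangle inequality, I would use the gluing lemma: gluing near-optimal couplings of $(X,Z)$ and $(Z,Y)$ gives $\mathrm{OT}_c(X,Y)\le\mathrm{OT}_c(X,Z)+\mathrm{OT}_c(Z,Y)$, using the triangle inequality of $c$. Adding the same inequality with the roles of $X$ and $Y$ reversed gives the triangle inequality for $D$.

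The key point is definiteness. If $D(X,Y)=0$, then $\mathrm{OT}_c(X,Y)=0$ and $\mathrm{OT}_c(Y,X)=0$. By Proposition \ref{prop:kamae-transport} this gives $X\le_{st}Y$ and $Y\le_{st}X$. Antisymmetry of $\le_{st}$ then yields $X\overset{d}{=}Y$. To justify antisymmetry on $\R^d$, I would use the coupling from Proposition \ref{prop:kamae} with $\tilde X\le\tilde Y$ almost surely. Since $\E[\tilde X_i]=\E[\tilde Y_i]$ (by $Y\le_{st}X$ applied to coordinates, where the means are finite by Step 1), each nonnegative difference $\tilde Y_i-\tilde X_i$ has zero mean. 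Hence $\tilde X=\tilde Y$ almost surely, and $X\overset{d}{=}Y$.

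Alternatively, antisymmetry follows from Theorem \ref{thm:main2}-type duality, since increasing bounded functions determine laws on a closed partially ordered Polish space.
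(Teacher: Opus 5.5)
Your proposal is correct and follows the same route as the paper. You identify $\mathcal{L}_{c,g}(E)=\mathcal{L}_d(E)$ via $c(\x,\mathbf{0})+c(\mathbf{0},\x)=d(\x,\mathbf{0})$ and $|g|\le d(\cdot,\mathbf{0})$, and you get the metric property of $\mathrm{OT}_c(\mathbf{X},\mathbf{Y})+\mathrm{OT}_c(\mathbf{Y},\mathbf{X})$ from the triangle inequality of Lemma \ref{lem:triangle} together with Proposition \ref{prop:kamae-transport}. You also correctly spell out details the paper leaves implicit: the reverse inclusion for an arbitrary base point $x_0$, the hypotheses $c\le d$ and the $g$-condition of Theorem \ref{thm:main3}, and antisymmetry of $\le_{st}$ via the coupling and equal coordinate means.
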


\begin{proof}
Note that $c$ is a quasi-pseudo-metric and $d$ is a metric on $E$.  Furthermore, $c(\x,\mathbf{0})+c(\mathbf{0},\x)=d(\mathbf{0},\x)$ and $|g(\x)|\le d(\mathbf{0},\x)$, so $\mathcal{L}_{c,g}(E)=\mathcal{L}_d(E)$.
Lastly, $(\mathbf{X},\mathbf{Y})\mapsto \mathrm{OT}_c(\mathbf{X},\mathbf{Y})+\mathrm{OT}_c(\mathbf{Y},\mathbf{X})$ is a metric modulo equality in distribution by Lemma \ref{lem:triangle} and Proposition \ref{prop:kamae-transport}.
The claim follows.
\end{proof}

We make the following definition, which is a special case of what is considered in \cite{muller2025multivariate}. Following the same convention, we use the notation $u$ for the test functions, as they often have an interpretation as \textit{utility functions} in applications. 

\begin{definition}\label{def:gamma}
\begin{enumerate}[label=(\roman*)]
\item For $\gamma\in[0,1]$, let $\mathcal{U}_{\gamma}$ be the class of continuously differentiable functions $u \colon \mathbb{R}^{d} \to \mathbb{R}$ such that
\begin{equation}
\label{eq:U-gamma-beta}
\gamma \le \frac{\partial u}{\partial x_i}(\x) \le 1 \quad\text{ for all } \x\in\mathbb{R}^d \text{ and } i\in \{1,\dots, d\}.
\end{equation}
\item We say that $\mathbf{Y}$ dominates $\mathbf{X}$ in $\gamma$-almost stochastic dominance (i.e.~$\mathbf{X}\le_\gamma \mathbf{Y}$), if $\E[u(\mathbf{X})]\le \E[u(\mathbf{Y})]$ for all $u\in \mathcal{U}_\gamma$.
\end{enumerate}
\end{definition}

We can show now the equivalence of this concept of an almost stochastic dominance with the concept based on optimal transport considered here, so that we can determine the best possible parameter $\gamma$ by solving an optimal transport problem. 

\begin{theorem}\label{thm:Rd}
Let $\gamma\in [0,1]$ and assume that $c,g$ and $d$ are given as in \eqref{eq:Rd}.  For $\mathbf{X},\mathbf{Y}\in \mathcal{L}_{d}(\R^d)$ satisfying $\E[g(\mathbf{X})]\le \E[g(\mathbf{Y})]$ we have
\begin{align*}
\gamma^\ast(\mathbf{X},\mathbf{Y};g)\le \gamma \qquad \Longleftrightarrow \qquad \mathbf{X} \le_\gamma \mathbf{Y}.
\end{align*}
\end{theorem}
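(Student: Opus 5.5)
By Theorem \ref{thm:main2} together with Corollary \ref{cor:Rd-specialization}, the condition $\gamma^\ast(\mathbf{X},\mathbf{Y};g)\le\gamma$ is equivalent to $\E[f(\mathbf{X})]\le \E[f(\mathbf{Y})]$ for all $f\in\mathcal{F}_{\gamma,g}\cup\{g\}$. Here $\E[g(\mathbf{X})]\le\E[g(\mathbf{Y})]$ is assumed. The task therefore reduces to comparing two classes of test functions. The first is
\[
\mathcal{F}_{\gamma,g}=\Bigl\{(1-\gamma)f+\gamma g:\ f(\x)-f(\y)\le \textstyle\sum_i (x_i-y_i)_+\Bigr\},
\]
and the second is $\mathcal{U}_\gamma$. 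I will show that $\mathcal{U}_\gamma\subseteq \mathcal{F}_{\gamma,g}$ and that every element of $\mathcal{F}_{\gamma,g}$ is a limit of elements of $\mathcal{U}_\gamma$ in a sense strong enough to pass expectations to the limit.

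\textbf{Inclusion $\mathcal{U}_\gamma\subseteq\mathcal{F}_{\gamma,g}$.} Take $u\in\mathcal{U}_\gamma$. For $\gamma<1$ set $f:=(u-\gamma g)/(1-\gamma)$. Then $f$ is $C^1$ with $0\le \partial_i f\le 1$. Integrating along the path that changes one coordinate at a time gives
\[
f(\x)-f(\y)=\sum_i \bigl(f(\ldots,x_i,\ldots)-f(\ldots,y_i,\ldots)\bigr)\le \sum_i (x_i-y_i)_+ .
\]
Hence $u\in\mathcal{F}_{\gamma,g}$. For $\gamma=1$ we have $u=g+\text{const}$, so the expectation inequality follows from the assumption on $g$. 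This proves the direction ``$\Rightarrow$'': $\gamma^\ast\le\gamma$ implies $\mathbf{X}\le_\gamma\mathbf{Y}$.

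\textbf{Converse.} Assume $\mathbf{X}\le_\gamma\mathbf{Y}$. The function $g$ itself is handled by hypothesis (and also $g\in\mathcal{U}_\gamma$ directly). Now take $f$ with $f(\x)-f(\y)\le c(\x,\y)$. Such an $f$ is $1$-Lipschitz with respect to $d$. Choosing $\y$ to differ from $\x$ only in coordinate $i$, the inequality shows that $f$ is increasing in each coordinate with increments at most $1$. Consequently, at points of differentiability (a.e., by Rademacher) we have $0\le\partial_i f\le 1$.

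\textbf{Main obstacle: regularisation.} The key step is to approximate $f$ by $C^1$ functions in the class. Mollify: $f_n:=f*\rho_n$ with a smooth probability density $\rho_n$ supported in a ball of radius $1/n$. Convolution preserves the inequality $f_n(\x)-f_n(\y)\le c(\x,\y)$, because $c$ is translation invariant. Hence $f_n\in C^\infty$ with $0\le\partial_i f_n\le1$, and so $u_n:=(1-\gamma)f_n+\gamma g\in\mathcal{U}_\gamma$. Moreover $|f_n-f|\le \sup_{|\mathbf z|\le 1/n} d(\mathbf{0},\mathbf z)\to 0$ uniformly. All functions involved grow at most linearly, and $\mathbf{X},\mathbf{Y}\in\mathcal{L}_d$, so the expectations are finite. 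Therefore $\E[u_n(\mathbf{X})]\to\E[u(\mathbf{X})]$ and likewise for $\mathbf{Y}$, where $u=(1-\gamma)f+\gamma g$. Passing to the limit in $\E[u_n(\mathbf{X})]\le\E[u_n(\mathbf{Y})]$ gives $\E[u(\mathbf{X})]\le\E[u(\mathbf{Y})]$.

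Applying Theorem \ref{thm:main2} then yields $\gamma^\ast(\mathbf{X},\mathbf{Y};g)\le\gamma$. I expect the only delicate points to be the case $\gamma=1$ and checking that convolution preserves the one-sided Lipschitz bound; both are handled above.
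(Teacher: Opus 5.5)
Your proposal is correct and follows essentially the same route as the paper: you reduce via Theorem~\ref{thm:main2}, show $\mathcal{U}_\gamma\subseteq\mathcal{F}_{\gamma,g}$ by integrating the derivative bounds, treat $\gamma=1$ separately, and mollify for the converse. The only differences are cosmetic: you integrate coordinate by coordinate where the paper uses the straight segment from $\mathbf{y}$ to $\mathbf{x}$, and you justify the mollification step more explicitly (translation invariance of $c$ preserves the one-sided Lipschitz bound, and convergence is uniform rather than merely locally uniform).
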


The robustness result for $\le_\gamma$ obtained from Theorem~\ref{thm:main3} uses, as the distance $OT_d$ between distributions on $\mathbb{R}^{d}$, the Wasserstein metric induced by the distance $d(\x,\y)= \| \x - \y\|_1$ derived from the $\ell_1$-norm. Usually one states robustness result using the classical Wasserstein distance based on Euclidean distance 
\[
W_1(\mathbf{X},\mathbf{Y}) :=\inf_{\tilde{\mathbf{X}} \overset{d}{=} \mathbf{X},\ \tilde{\mathbf{Y}} \overset{d}{=} \mathbf{X}}\E[\|\tilde{\mathbf{X}} - \tilde{\mathbf{Y}}\|_2].
\]
Such a result can still be derived from Theorem \ref{thm:main3}  by using the equivalence of the norms. We obtain the following robustness result of $\le_\gamma$ with respect to the Wasserstein distance $W_1$. 

\begin{corollary}\label{thm:robust-gamma}
Assume that $0 \le \gamma < \gamma' \le 1$, $\mathbf{X} \le_\gamma \mathbf{Y}$ and  $\delta := \E[g(\mathbf{Y})] -  \E[g(\mathbf{X})] > 0$.  If 
\begin{equation} \label{eps-robust}
\varepsilon \le \frac{(\gamma' - \gamma)\delta}{2\sqrt{d}(1-\gamma)},
\end{equation} then it holds for all  $\hat{\mathbf{X}}$ and  $\hat{\mathbf{Y}}$ with $W_1(\mathbf{X},\hat{\mathbf{X}})  \le \varepsilon$ and $W_1(\mathbf{Y},\hat{\mathbf{Y}})  \le \varepsilon$ that
$$
\hat{\mathbf{X}} \le_{\gamma'}  \hat{\mathbf{Y}}.
$$
\end{corollary}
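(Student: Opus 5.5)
The plan is to reduce Corollary~\ref{thm:robust-gamma} to Theorem~\ref{thm:main3} (through Corollary~\ref{cor:Rd-specialization}) and Theorem~\ref{thm:Rd}, using the $\ell_1$ structure \eqref{eq:Rd}. First, Theorem~\ref{thm:Rd} turns the hypothesis $\mathbf{X}\le_\gamma \mathbf{Y}$ into $\gamma^\ast(\mathbf{X},\mathbf{Y};g)\le\gamma$. At the end it will turn a bound $\gamma^\ast(\hat{\mathbf{X}},\hat{\mathbf{Y}};g)\le\gamma'$, together with $\E[g(\hat{\mathbf{X}})]\le\E[g(\hat{\mathbf{Y}})]$, back into $\hat{\mathbf{X}}\le_{\gamma'}\hat{\mathbf{Y}}$.

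Next I relate the metrics. Since $\|\x-\y\|_1\le\sqrt d\,\|\x-\y\|_2$, we have $\mathrm{OT}_d\le\sqrt d\,W_1$, so $\mathrm{OT}_d(\mathbf{X},\hat{\mathbf{X}})\le\sqrt d\,\varepsilon$ and likewise for $\mathbf{Y}$. Finiteness of $W_1$ gives $\hat{\mathbf{X}},\hat{\mathbf{Y}}\in\mathcal{L}_d$. The hypothesis of Theorem~\ref{thm:main3} on $g$ holds because $g(\x)-g(\y)\le c(\x,\y)\le d(\x,\y)$.

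I then apply Theorem~\ref{thm:main3} twice.

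\textbf{Step 1: replace $\mathbf{Y}$.} Put $\epsilon_1=\sqrt d\,\varepsilon$. From \eqref{eps-robust} and $\gamma'-\gamma\le 1-\gamma$ we get $\epsilon_1\le \delta/2\le\delta$, so the theorem gives
\[
\gamma^\ast(\mathbf{X},\hat{\mathbf{Y}};g)\le \gamma_1:=\gamma+\frac{(1-\gamma)\epsilon_1}{\delta}.
\]

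\textbf{Step 2: replace $\mathbf{X}$.} Apply the theorem again to the pair $(\mathbf{X},\hat{\mathbf{Y}})$, with new gap $\delta_1:=\E[g(\hat{\mathbf{Y}})]-\E[g(\mathbf{X})]\ge\delta-\epsilon_1\ge\delta/2>0$. The condition $\epsilon_1\le\delta_1$ holds, so
\[
\gamma^\ast(\hat{\mathbf{X}},\hat{\mathbf{Y}};g)\le\gamma_1+\frac{(1-\gamma_1)\epsilon_1}{\delta_1}.
\]
The inequality $\E[g(\hat{\mathbf{X}})]\le\E[g(\hat{\mathbf{Y}})]$ needed for the final appeal to Theorem~\ref{thm:Rd} follows from $\delta-2\epsilon_1\ge0$.

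The main obstacle is the constant. Writing $t=\epsilon_1/\delta\le(\gamma'-\gamma)/(2(1-\gamma))$, the naive bound above is roughly $\gamma+(1-\gamma)t+(1-\gamma)(1-t)t/(1-t)=\gamma+2(1-\gamma)t\le\gamma'$. This needs $1-\gamma_1=(1-\gamma)(1-t)$ and $\delta_1\ge\delta(1-t)$, both of which hold. So the two perturbations together cost exactly $2(1-\gamma)t$, which explains the factor $2$ in \eqref{eps-robust}.

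If that bookkeeping failed, I would instead argue directly from the definition. The triangle inequality for $\mathrm{OT}_c$ (Lemma~\ref{lem:triangle}) with $c\le d$ gives $\mathrm{OT}_c(\hat{\mathbf{X}},\hat{\mathbf{Y}})\le \mathrm{OT}_c(\mathbf{X},\mathbf{Y})+2\epsilon_1$, and the denominator in \eqref{eq:gamma_star} changes by at most $\pm 2\epsilon_1$ in the right direction. The monotonicity of $a\mapsto a/(a+b)$ then yields the same bound.
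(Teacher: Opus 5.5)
Your proposal is correct and follows the paper's route: bound $\mathrm{OT}_d\le\sqrt d\,W_1$, apply Theorem~\ref{thm:main3} twice, and translate back via Theorem~\ref{thm:Rd}. (You perturb $\mathbf{Y}$ first and the paper perturbs $\mathbf{X}$ first, which is immaterial.) Your bookkeeping is in fact more careful than the paper's: in the second application the paper reuses $c=\sqrt d\,\varepsilon/\delta$ with the original gap $\delta$ to get $\gamma+(1-\gamma)(2c-c^2)$, although the gap may have shrunk to $\delta(1-c)$, whereas your cancellation of $1-\gamma_1=(1-\gamma)(1-t)$ against $\delta_1\ge\delta(1-t)$ gives exactly the justified bound $\gamma+2(1-\gamma)t\le\gamma'$.
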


Notice that without incraasing $\gamma$ a bit, we cannot expect any robustness result of such a form, even for very large $\delta$. Indeed, it has been shown by \cite{durante2022}, that for any bounded univariate random variables $X,Y$ with $X \le_{st} Y$ and any $\varepsilon > 0$ one can find a random variable $X'$ with $W_1(X,X') \le \varepsilon$ and $X' \not\le_{st} Y$; see also Theorem 1 in \cite{muller2024smps} for an extension of this non-robustness result to arbitrary unbounded univariate random variables with a finite mean.

\section{Numerical experiments}

In this section we describe some numerical experiments for examples that have already been used in previous papers by \cite{muller2025multivariate}  and \cite{range2019first}.

\subsection{Comparison of sunshine in Rome and Siegen}

For our first example we reconsider the example in section 5 of \cite{muller2025multivariate}, where bounds for $\gamma$ have been derived for the comparison of sunshine distributions of the two cities of Rome and Siegen, where two of the authors of that paper lived. To be more precise, the data used there for the comparison of the intensity of sunshine are data of the so called \textit{global horizontal irradiation} obtained from publicly available satellite data. \cite{muller2025multivariate}  have shown that for this example of 24-dimensional empirical distributions multivariate almost stochastic dominance holds for $\gamma = 0.525$. With the methods of this paper we can now determine the best possible $\gamma$ by solving the appropriate optimal transport problem.

\begin{figure}[htb]
	\centering
	\includegraphics[width=3in]{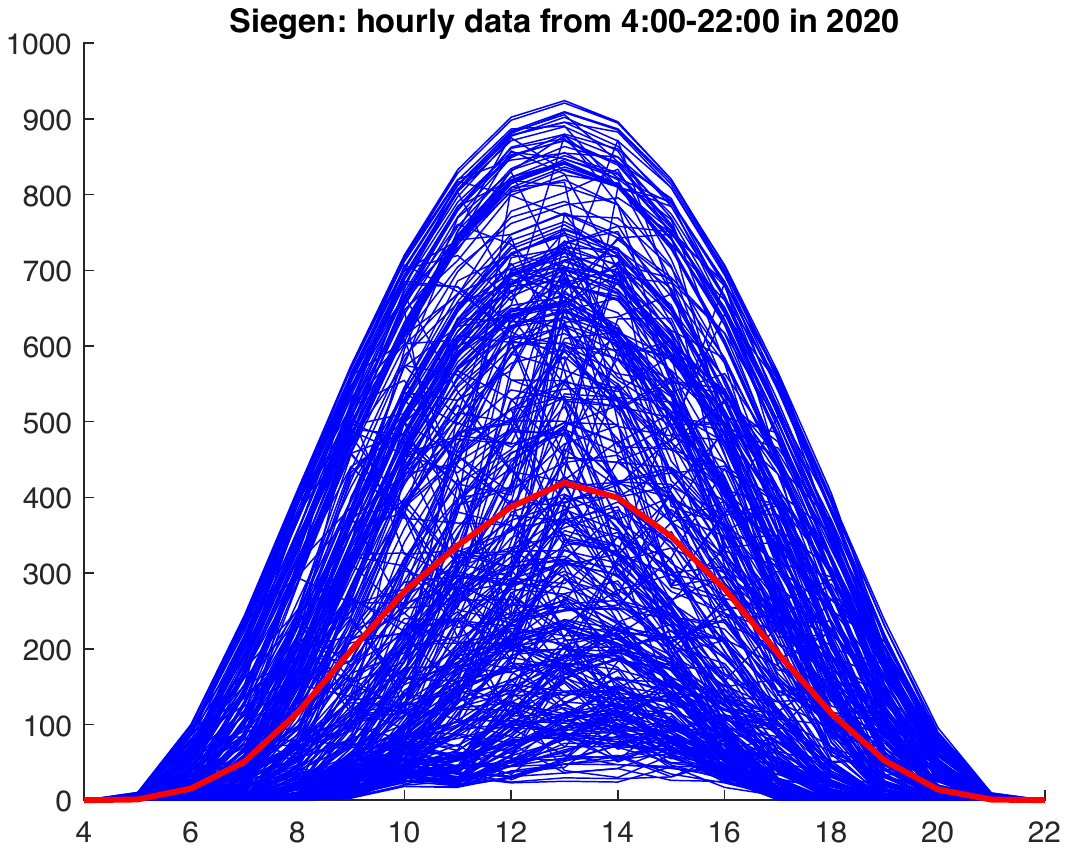}\includegraphics[width=3in]{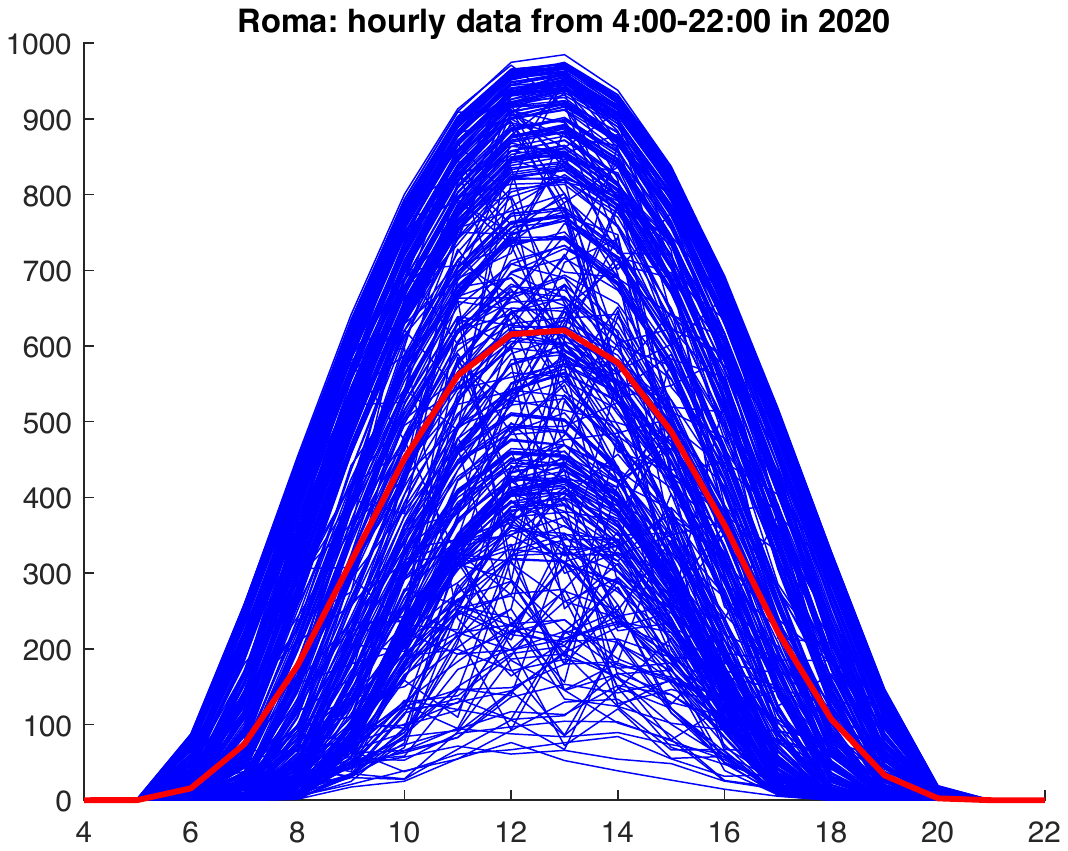}
	\caption{Global horizontal irradiation in Siegen and Rome in 2020.}
	\label{fi:siegen-rome}
\end{figure}

We take hourly measurements of global horiizontal irradiation for Rome and Siegen represented as sets of 24-dimensional points $\{\mathbf{x}_R^i\}_{i = 1}^N$ and $\{\mathbf{x}_S^i\}_{i = 1}^N$ for $N=365$, i.e.~$\mathbf{x}_R,\mathbf{x}_S$ are hourly measurements taken at each day of the year and $d=24$ is the dimension of the state space. The data for the year 2020 are visualized in Figure 1. The black line describes the mean. One can clearly see that on average there is more sunshine in Rome than in Siegen, so that we can find $\gamma < 1$.

We let $\mathbf{X}_R$ and $\mathbf{X}_S$ be random vectors with empirical laws $\mathrm{Law}(\mathbf{X}_R) := \frac{1}{N} \sum_{i = 1}^N \delta_{\mathbf{x}_R^i}$ and $\mathrm{Law}(\mathbf{X}_S) := \frac{1}{N} \sum_{i = 1}^N \delta_{\mathbf{x}_S^i}$ and aim to compute the coefficient
\begin{equation}\label{eqn:asd_coef}
\gamma^\ast(\mathbf{X}_S,\mathbf{X}_R;g) = \frac{\mathrm{OT}_c(\mathbf{X}_S,\mathbf{X}_R)}{\E[g(\mathbf{X}_R)]-\E[g(\mathbf{X}_S)] + \mathrm{OT}_c(\mathbf{X}_S,\mathbf{X}_R)}
\end{equation}
for the cost function $c(\mathbf{x}, \mathbf{y}) = \sum_{i=1}^d (x_i - y_i)_+$ and $g(\x) = \sum_{i = 1}^d x_i$. In this way we determine the threshold $\gamma^\ast$ such that $\mathbf{X}_S \le_{\gamma} \mathbf{X}_R$ holds for any $\gamma \geq \gamma^\ast$ according to Theorem \ref{thm:Rd}. 

\begin{algorithm}[h!]
\caption{Auction algorithm: $\mathcal{X}$ are the ``bidders" and $\mathcal{Y}$ are the ``objects", $b(\mathbf{x}, \mathbf{y})$ represents the benefit of the assignment $\sigma(\mathbf{x})=\mathbf{y}$.}\label{alg:auction}
\KwData{$\mathcal{X} = \{\mathbf{x}^1, \ldots, \mathbf{x}^N\}, \;\; \mathcal{Y} = \{\mathbf{y}^1, \ldots, \mathbf{y}^N\}, \;\; \varepsilon_a > 0$}
\KwResult{$\max_\sigma \sum_{i = 1}^N b(\mathbf{x}^i, \mathbf{y}^{\sigma(i)})$ \text{with absolute error bounded by}  $\varepsilon_a$}
$\varepsilon \gets \max_{i, j} b(\mathbf{x}^i, \mathbf{y}^j), \;\; f: \mathcal{Y} \to \R,\;\; f(\mathbf{y}) \equiv 0$\;
$\sigma: \{1, \ldots, N\} \to \{1, \ldots, N\} \cup \{\emptyset\}, \;\; \sigma(\mathbf{x}) \equiv \emptyset$\;
\While{\text{stopping criterion}($\varepsilon, \varepsilon_a$)}{
\eIf{$\{\mathbf{x}: \sigma(\mathbf{x}) = \emptyset\} = \emptyset$}{$\sigma(\mathbf{x}) \equiv \emptyset$\; $\varepsilon \gets \varepsilon / 5$\;}
{$\mathbf{x} \gets \text{an element of } \{\mathbf{x}:\sigma(\mathbf{x}) = \emptyset\}$\; $\mathbf{y}_{\mathbf{x}} \gets \text{argmax}_{\mathbf{y} \in \mathcal{Y}} [b(\mathbf{x}, \mathbf{y}) - f(\mathbf{y})], \;\; \tilde{\mathbf{y}}_{\mathbf{x}} \gets \text{second best}$\;
$f(\mathbf{y}_{\mathbf{x}}) \gets f(\mathbf{y}_{\mathbf{x}}) + [b(\mathbf{x}, \mathbf{y}_{\mathbf{x}}) - f(\mathbf{y}_{\mathbf{x}}) - b(\mathbf{x}, \tilde{\mathbf{y}}_{\mathbf{x}}) + f(\tilde{\mathbf{y}}_{\mathbf{x}})] + \varepsilon$\;
\If{$\sigma^{-1}(\mathbf{y}_{\mathbf{x}}) \neq \emptyset$}{$\tilde{\mathbf{x}} \gets \sigma^{-1}(\mathbf{y}_{\mathbf{x}}), \;\; \sigma(\tilde{\mathbf{x}}) \gets \emptyset$\;}
$\sigma(\mathbf{x}) \gets \mathbf{y}_{\mathbf{x}}$\;
}
}
\end{algorithm}

To find $\mathrm{OT}_c(\mathbf{X}_S, \mathbf{X}_R)$ we run the Auction Algorithm \ref{alg:auction} \citep{bertsekas1990auction} with $b \equiv -c$ and $\mathcal{X} = \{\mathbf{x}_S^i\}_{i = 1}^N, \mathcal{Y} = \{\mathbf{x}_R^i\}_{i = 1}^N$.  This type of algorithm uses OT duality and is based on the idea of an auction. The aim is to approximate the Kantorovich potential and its $c$-conjugate via a \emph{bidding procedure}: start with some candidate function $f:\mathcal{Y}\to \R$ and think of $f(\mathbf{y})$ as the price of $\mathbf{y}\in \mathcal{Y}$. If $\mathbf{x}$ wants to buy $\mathbf{y}$ (because its price is low), then $\mathbf{y}$ is assigned to $\mathbf{x}$ and the price $f(\mathbf{y})$ of $\mathbf{y}$ is increased just to the point that $\mathbf{y}$ is as desirable for $\mathbf{x}$ as the second best object. One then picks the next unassigned bidder $\mathbf{x}$, assigns another object to $\mathbf{x}$ and raises its price via the same principle. This leads to the following observation: as the prices $f$ can only increase over time, any object $\mathbf{y}$ assigned to $\mathbf{x}$ will stay the best choice for $\mathbf{x}$ until it is assigned to another bidder. The bidding is iterated until all the objects $\mathcal{Y}$ are assigned to the owners $\mathcal{X}$; the optimality of the resulting assignment is then implied by the observation we just made. Importantly for application, the complexity of the Auction algorithm scales \emph{linearly} in $d$. This is the principal reason why we use it here, and why we consider it particularly well suited to high-dimensional applications.

We compute $\gamma^\ast$ for each year separately. The results are shown in Figure \ref{fig:rome-siegen}. According to the data, the coefficient $\gamma^\ast$ is quite stable over the years, with values ranging from 0.11 to 0.16. As expected, this shows that there is no first-order stochastic dominance of $\mathbf{X}_R$ over $\mathbf{X}_S$, and the derived values for $\gamma^\ast$ are much smaller as the value of $\gamma = 0.525$ for the year 2020 that was derived in \cite{muller2025multivariate}. The fact that the best possible values for $\gamma^\ast$ are quite stable over the years is also not very surprising in light of the robustness result Theorem \ref{thm:robust-gamma}.

\begin{figure}[h!]
    \centering
    \includegraphics[scale=0.4]{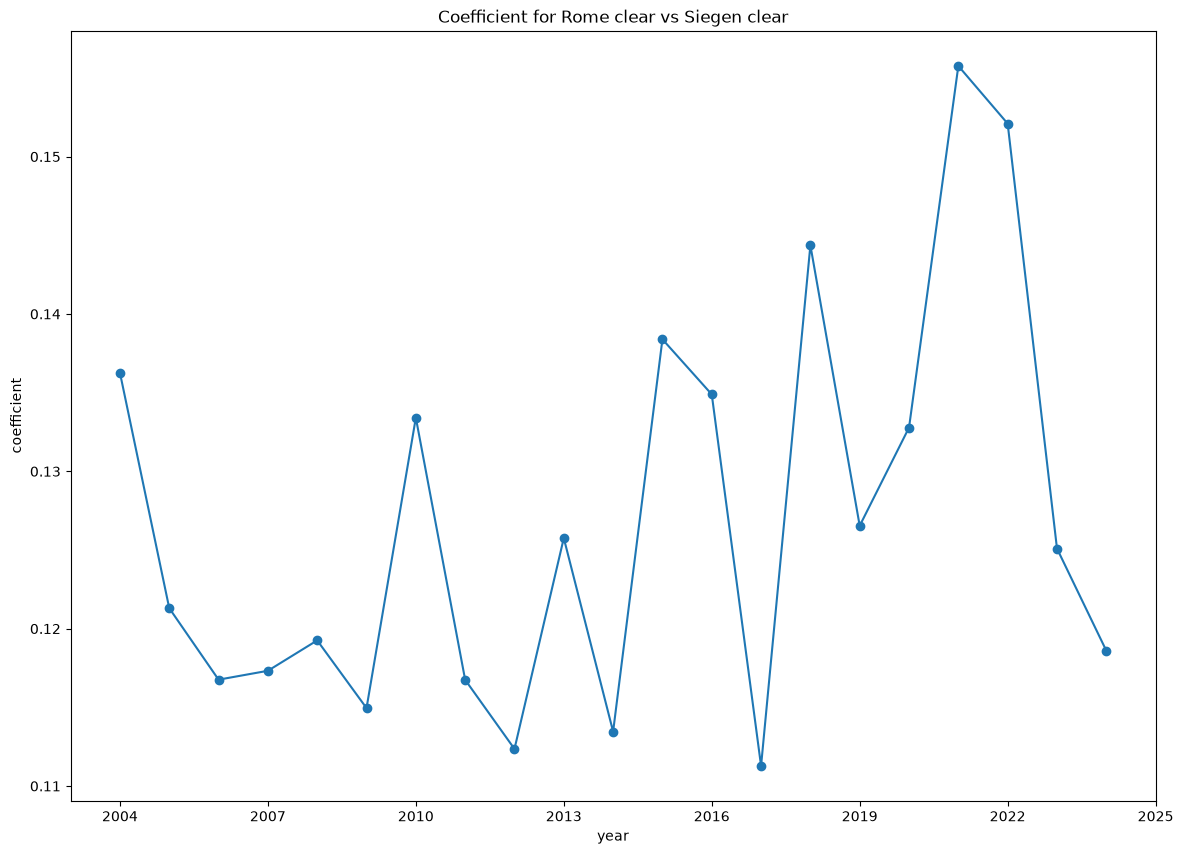}
    \caption{Coefficient $\gamma^\ast$ for yearly sunshine data Rome vs Siegen,  2004-2024}
    \label{fig:rome-siegen}
\end{figure}

\subsection{First-order stochastic dominance for multivariate Gaussian distributions}

We now compute $\gamma^\ast$ for the example in \cite[Section 5]{range2019first}. In this example the authors compare two bivariate normal distributions, which are not in stochastic order. More precisely, let $\mathbf{X}\sim\mathcal{N}(\mu_1, \Sigma_1)$ and $\mathbf{Y}\sim\mathcal{N}(\mu_2, \Sigma_2)$, where 
\begin{align*}
\mu_1= \begin{bmatrix}
    500\\ 500
\end{bmatrix},  \quad
\mu_2= \begin{bmatrix}
    450\\ 450
\end{bmatrix},
\end{align*}
and 
\begin{align*}
\Sigma_1 = \begin{bmatrix}
    15000 & 8000\\ 8000 & 10000
\end{bmatrix} ,  \quad
\Sigma_2= \begin{bmatrix}
    9000 & 5000\\ 5000 & 8000
\end{bmatrix}.
\end{align*}
As $\Sigma_1 \neq \Sigma_2$, the multivariate normal distributions cannot be comparable with respect to $\le_{st}$ as shown in \cite{Mue:AISM2001}. 

As in \cite{range2019first}, we approximate $\mathbf{X},\mathbf{Y}$ by discrete random vectors on a finite lattice and use the cost $c$ and function $g$ from \eqref{eq:Rd}. For this we take $k\in \N$, divide $[1, 1024]\times[1, 1024]$ into $2^{2k}$ boxes of equal length and compute $\gamma^\ast(\mathbf{Y}_k,\mathbf{X}_k;g)$, denoted by $\gamma^\ast(k)$, for the corresponding discrete laws supported on the bottom left corners of each box. Figure \ref{fig:range} shows a plot of $\gamma^\ast(k)$ for $k=1, \dots, 5$. Similarly to \cite{range2019first}, we find that first-order stochastic dominance of $\mathbf{X}_k$ over $\mathbf{Y}_k$ holds (i.e. $\gamma^\ast(k)=0$) for small values of $k\in \N$. However, in our case $\gamma^\ast(k)>0$ for $k\ge 3$, while their test for first-order stochastic dominance only rejects the hypothesis for $k>5$.  The difference comes from the fact, that \cite{range2019first} negect small probabilities of size $< 10^{-6}$ to speed up their algorithm, but our algorithm shows that these small probabilities in the tails matter for this problem. 

\begin{figure}[h!]
    \centering
    \includegraphics[scale=0.3]{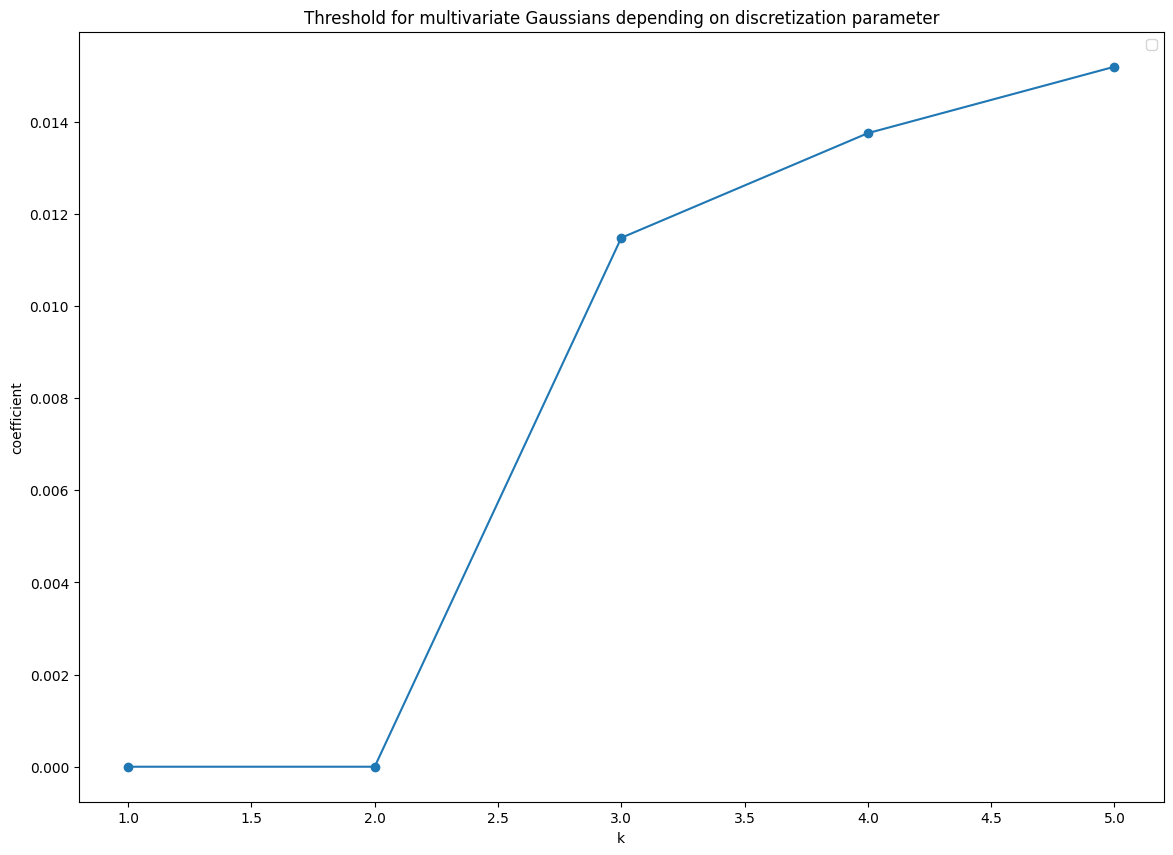}
    \caption{Coefficient $\gamma^\ast$ for discretized bivariate normal distributions}
    \label{fig:range}
\end{figure}

\section{Proofs of main results}

\subsection{Proof of Theorem \ref{thm:main1}}

We need the following lemma.

\begin{lemma} \label{lem:triangle}
If the Borel measurable cost function $c:E\times E \to [0,\infty)$ satisfies the triangle inequality 
\begin{align}\label{eq:triangle}
c(x,z) \le c(x,y) + c(y,z)\quad \mbox{ for all } x,y,z \in E,
\end{align}
then we also have the corresponding triangle inequality for $\mathrm{OT}_c$ for arbitrary $E$-valued random variables $X,Y,Z$:
$$
\mathrm{OT}_c(X,Z) \le \mathrm{OT}_c(X,Y) + \mathrm{OT}_c(Y,Z).
$$
\end{lemma}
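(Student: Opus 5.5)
We will use the standard gluing construction. Fix $\varepsilon>0$. If either $\mathrm{OT}_c(X,Y)$ or $\mathrm{OT}_c(Y,Z)$ is infinite there is nothing to prove, so assume both are finite. By the definition of $\mathrm{OT}_c$ as an infimum over couplings, choose a coupling $(X_1,Y_1)$ with $X_1\overset{d}{=}X$, $Y_1\overset{d}{=}Y$ and $\E[c(X_1,Y_1)]\le \mathrm{OT}_c(X,Y)+\varepsilon$. Likewise choose a coupling $(Y_2,Z_2)$ with $Y_2\overset{d}{=}Y$, $Z_2\overset{d}{=}Z$ and $\E[c(Y_2,Z_2)]\le \mathrm{OT}_c(Y,Z)+\varepsilon$. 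Denote by $\pi_1$ and $\pi_2$ the joint laws of these pairs on $E\times E$. Both have the same $Y$-marginal $\mu_Y:=\mathrm{Law}(Y)$.

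Next we apply the gluing lemma. Since $E$ is Polish, disintegration gives regular conditional distributions: kernels $\kappa_1(y,dx)$ with $\pi_1(dx,dy)=\mu_Y(dy)\kappa_1(y,dx)$, and $\kappa_2(y,dz)$ with $\pi_2(dy,dz)=\mu_Y(dy)\kappa_2(y,dz)$. We define a probability measure on $E^3$ by
\[
\rho(dx,dy,dz):=\mu_Y(dy)\,\kappa_1(y,dx)\,\kappa_2(y,dz).
\]
By construction, its $(x,y)$-marginal is $\pi_1$ and its $(y,z)$-marginal is $\pi_2$. Realize random variables $(\tilde X,\tilde Y,\tilde Z)$ with law $\rho$ on the underlying probability space; enlarging it with independent uniform variables if necessary, as is implicit in the definition of $\mathrm{OT}_c$. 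Then $\tilde X\overset{d}{=}X$ and $\tilde Z\overset{d}{=}Z$, so $(\tilde X,\tilde Z)$ is an admissible coupling for $\mathrm{OT}_c(X,Z)$.

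Finally, we integrate the pointwise triangle inequality \eqref{eq:triangle}. Because $c\ge 0$ is Borel measurable, all expectations below are well defined in $[0,\infty]$, and
\[
\mathrm{OT}_c(X,Z)\le \E[c(\tilde X,\tilde Z)]\le \E[c(\tilde X,\tilde Y)]+\E[c(\tilde Y,\tilde Z)]=\E[c(X_1,Y_1)]+\E[c(Y_2,Z_2)]\le \mathrm{OT}_c(X,Y)+\mathrm{OT}_c(Y,Z)+2\varepsilon.
\]
Letting $\varepsilon\downarrow 0$ gives the claim.

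The only nontrivial step is the gluing. It relies on the existence of regular conditional distributions, which holds because $E$ is Polish. No lower semicontinuity or existence of optimal couplings is needed, since we work with $\varepsilon$-optimal couplings throughout.
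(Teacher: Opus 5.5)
Your proof is correct and follows the same route as the paper: take $\varepsilon$-optimal couplings for $(X,Y)$ and $(Y,Z)$, glue them along the common $Y$-marginal, integrate the pointwise triangle inequality, and let $\varepsilon\downarrow 0$. The only difference is that you build the gluing explicitly by disintegration on the Polish space $E$, whereas the paper simply invokes the gluing lemma from Villani.
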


\begin{proof}[Proof of Lemma \ref{lem:triangle}]
If $\mathrm{OT}_c(X,Y)+\mathrm{OT}_c(Y,Z)=\infty$, there is nothing to prove. Otherwise, we mimic the proof of \cite[Chapter 6]{villani2009}: we fix $\epsilon>0$ and, by the gluing lemma, take $\tilde X\overset{d}{=} X$, $\tilde Y\overset{d}{=} Y$, and $\tilde Z\overset{d}{=} Z$ such that
$$
|\mathrm{OT}_c(X,Y) -\E[c(\tilde X,\tilde Y)]|\le \epsilon \mbox{ and }  |\mathrm{OT}_c(Y,Z)- \E[c(\tilde Y,\tilde Z)]|\le \epsilon.
$$
Therefore we get
\begin{align*}
\mathrm{OT}_c(X,Z)  &\le  \E[c(\tilde X,\tilde Z)] \stackrel{\eqref{eq:triangle}}{\le} \E[c(\tilde X,\tilde Y)]  + \E[c(\tilde Y,\tilde Z)] \\
&\le \mathrm{OT}_c(X,Y) + \mathrm{OT}_c(Y,Z) +2\epsilon.
\end{align*}
As $\epsilon>0$ was arbitrary, the claim follows.
\end{proof}

\begin{proof}[Proof of Theorem \ref{thm:main1}]
\begin{enumerate}[label=(\roman*)]
\item We obviously have $X \le_{\gamma,g} X$ for every $X\in \mathcal{L}_{c,g}(E)$, as $\mathrm{OT}_c(X,X) = 0$, recalling that $c$ is a quasi-pseudo-metric.

\item  Assume that $X \le_{\gamma,g} Y$ and $Y \le_{\gamma,g} Z$. Then \eqref{eq:gamma_star} can be rewritten as
\begin{align*}
(1-\gamma) \mathrm{OT}_c(X,Y) &\le \gamma[\E[g(Y)] - \E[g(X)]],\\
(1-\gamma) \mathrm{OT}_c(Y,Z) &\le \gamma[\E[g(Z)]-\E[g(Y)]],
\end{align*}
respectively, and hence 
$$
(1-\gamma) \mathrm{OT}_c(X,Z) \stackrel{Lem.\,\ref{lem:triangle}}{\le} (1-\gamma) [\mathrm{OT}_c(X,Y)+\mathrm{OT}_c(Y,Z)] \le \gamma[\E[g(Z)]-\E[g(X)]],
$$
thus $X \le_{\gamma,g} Z$. 

\item If we have $X \le_{\gamma,g} Y$ and $Y \le_{\gamma,g} X$, then necessarily $\E[g(X)] \le \E[g(Y)]$ and $\E[g(Y)] \le \E[g(X)]$, thus $\E[g(X)] = \E[g(Y)]$. Recalling $\gamma<1$, \eqref{eq:gamma_star} can only hold if $\mathrm{OT}_c(X,Y) = 0$ and $\mathrm{OT}_c(Y,X) = 0$, respectively. As $(X,Y)\mapsto \mathrm{OT}_c(X,Y) + \mathrm{OT}_c(Y,X)$ is a metric modulo equality in distribution by assumption, this implies $X\overset{d}{=}Y$.
\end{enumerate}
\end{proof}

\subsection{Proof of Theorem \ref{thm:main2}}

We need a number of auxiliary results.

\begin{lemma} \label{lem:F}
Assume that there is a class $\mathcal{F}$ of functions $f:E\to \R$ such that, for the random variables under consideration,
$$
\mathrm{OT}_c(X,Y) = \sup \{\E[f(X)-f(Y)]: f\in \mathcal{F}\}. 
$$
Then, for $X,Y\in \mathcal{L}_{c,g}(E)$,
\begin{equation} \label{eq:gamma-f-diff}
X \le_{\gamma,g} Y \qquad \Longleftrightarrow \qquad  \E[f(X)] \le \E[f(Y)] \quad \mbox{ for all } f \in \mathcal{F}_{\gamma,g} \cup\{g\},
\end{equation}
where
$$
 \mathcal{F}_{\gamma,g} := \{ (1-\gamma) f + \gamma g: f \in \mathcal{F} \}.
$$
\end{lemma}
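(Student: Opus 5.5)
The proof rests on linearity of expectation together with the dual representation assumed in the hypothesis. The target inequality in the definition of $\le_{\gamma,g}$ can be put in linear form. Write $\Delta:=\E[g(Y)]-\E[g(X)]$. Whenever $\Delta\ge 0$, the condition $\gamma^\ast(X,Y;g)\le\gamma$ is equivalent to
\[
(1-\gamma)\,\mathrm{OT}_c(X,Y)\le \gamma\,\Delta .
\]
This is checked by clearing the (nonnegative) denominator $\Delta+\mathrm{OT}_c(X,Y)$. The degenerate case where this denominator vanishes must be handled separately using the convention $0/0=0$. In that case $\mathrm{OT}_c=0$ and $\Delta=0$, so both sides are $0$ and the equivalence holds trivially. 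Finiteness of all quantities follows from $X,Y\in\mathcal{L}_{c,g}(E)$: $\mathrm{OT}_c(X,Y)\le \E[c(X,x_0)]+\E[c(x_0,Y)]<\infty$, using an independent coupling and the triangle inequality for $c$. For the test functions we additionally need $f$ to be integrable for $X$ and $Y$, which I take to be implicit in the hypothesis.

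For ``$\Rightarrow$'': the requirement $\E[g(X)]\le\E[g(Y)]$ is part of the definition, which covers the test function $g$. Now take $f\in\mathcal{F}$. By the assumed dual formula, $\E[f(X)-f(Y)]\le\mathrm{OT}_c(X,Y)$. Hence
\[
\E[(1-\gamma)f(X)+\gamma g(X)]-\E[(1-\gamma)f(Y)+\gamma g(Y)]\le (1-\gamma)\mathrm{OT}_c(X,Y)-\gamma\Delta\le 0 .
\]

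For ``$\Leftarrow$'': testing with $g$ gives $\Delta\ge0$. For every $f\in\mathcal{F}$, the assumption gives $(1-\gamma)\E[f(X)-f(Y)]\le\gamma\Delta$. Taking the supremum over $f\in\mathcal{F}$ and using the representation of $\mathrm{OT}_c$ yields $(1-\gamma)\mathrm{OT}_c(X,Y)\le\gamma\Delta$. By the first paragraph this is exactly $\gamma^\ast\le\gamma$.

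The main obstacle is not this algebra but bookkeeping at the edges. First, when $\gamma=1$ the constraint on $\mathrm{OT}_c$ disappears, and both sides reduce to $\Delta\ge0$. Second, the $0/0$ convention has to be matched, as described above. Third, the supremum step requires $1-\gamma\ge0$, so multiplying by it preserves inequalities. I expect these to be routine once the linearised form of $\gamma^\ast\le\gamma$ is written down.
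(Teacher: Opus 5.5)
Your proposal is correct and follows essentially the same route as the paper: you linearise $\gamma^\ast(X,Y;g)\le\gamma$ into $(1-\gamma)\,\mathrm{OT}_c(X,Y)\le\gamma\,(\E[g(Y)]-\E[g(X)])$ and then combine the assumed dual representation of $\mathrm{OT}_c$ with linearity of expectation. The only differences are that you split the paper's chain of equivalences into two implications and make the edge cases explicit ($0/0$ convention, finiteness of $\mathrm{OT}_c$, $\gamma=1$); for finiteness, note that $X$ and $Y$ may have different base points in $\mathcal{L}_{c,g}(E)$, but the triangle inequality for $c$ lets you replace them by a common $x_0$.
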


\begin{proof}
The test function $g$ gives the condition $\E[g(X)]\le \E[g(Y)]$. Under this condition, apply the assumed representation. Then
\begin{eqnarray*}
 \gamma^\ast(X,Y;g)\le \gamma & \Longleftrightarrow & \mathrm{OT}_c(X,Y) \le \gamma [\E[g(Y)]-\E[g(X)] + \mathrm{OT}_c(X,Y)]\\
& \Longleftrightarrow & (1-\gamma) \mathrm{OT}_c(X,Y) \le \gamma [\E[g(Y)]-\E[g(X)]] \\
& \Longleftrightarrow &   (1-\gamma) \sup_{f \in \mathcal{F}}  \ [\E[f(X)]-\E[f(Y)]] + \gamma [\E[g(X)]-\E[g(Y)]] \le 0 \\
& \Longleftrightarrow  &  (1-\gamma) \E[f(X)] + \gamma \E[g(X)] \le  (1-\gamma) \E[f(Y)] + \gamma \E[g(Y)] \quad \forall f \in \mathcal{F}\\
& \Longleftrightarrow  &   \E[((1-\gamma) f+ \gamma g)(X)]   \le   \E[((1-\gamma) f+ \gamma g)(Y)]   \quad \forall f \in \mathcal{F}.
\end{eqnarray*}
\end{proof}

We now extend the well-known Kantorovich-Rubinstein formula from metrics to quasi-pseudo-metrics, that are not necessarily symmetric. This can be done using the methods described in \cite[Chapter 5]{villani2009}. In the following we will use notations similar to the ones introduced there to describe this extension. 

\begin{definition} \label{c-convex}
a) A function $\psi: E \to \mathbb{R}$ is said to be \emph{$c$-convex}, if there is a function $\zeta: E \to \mathbb{R}$ such that for all $x \in E$ 
$$
\psi(x) = \sup \{ \zeta(y) - c(x,y): y \in E  \}.
$$
b) We define the set of all 1-Lipschitz functions with respect to $c$ as 
$$
\mathcal{L}_1(E,c) := \{ f: E \to \mathbb{R}: f(x) - f(y) \le c(x,y) \mbox{ for all } x,y \in E  \}.
$$
c) For a $c$-convex function we define the \emph{$c$-transform}
$$
\psi^c(x) := \inf \{ \psi(y) + c(y,x): y \in E  \}.
$$
\end{definition}

\begin{lemma} \label{prop-c-convex}
The following hold:
\begin{enumerate}[label=(\roman*)]
\item A function $\psi$ is $c$-convex, if and only if $-\psi \in  \mathcal{L}_1(E,c)$.
\item For a $c$-convex function $\psi$ we have $\psi^c = \psi$.
\item  Any function $\psi \in  \mathcal{L}_1(E,c)$ is $\le_c$-monotone. 
\end{enumerate}
\end{lemma}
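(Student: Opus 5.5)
We prove the three items directly from the definitions, using only that $c$ is a quasi-pseudo-metric (and for (iii) the order $x\le_c y \Leftrightarrow c(x,y)=0$ from \eqref{eq:preorder}).

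For (i), suppose first that $\psi$ is $c$-convex, so $\psi(x)=\sup_y\{\zeta(y)-c(x,y)\}$ for some $\zeta$. For $x,x'\in E$ and any $y$, the triangle inequality gives $c(x',y)\le c(x',x)+c(x,y)$. Hence
\[
\zeta(y)-c(x',y)\ge \zeta(y)-c(x,y)-c(x',x).
\]
Taking the supremum over $y$ yields $\psi(x')\ge\psi(x)-c(x',x)$. Equivalently, $(-\psi)(x')-(-\psi)(x)\le c(x',x)$, so $-\psi\in\mathcal{L}_1(E,c)$.

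Conversely, let $-\psi\in\mathcal{L}_1(E,c)$ and choose $\zeta:=\psi$. Taking $y=x$ and using $c(x,x)=0$ gives $\sup_y\{\psi(y)-c(x,y)\}\ge\psi(x)$. The Lipschitz condition applied to the pair $(x,y)$ reads $\psi(y)-\psi(x)\le c(x,y)$, which gives the reverse inequality. So $\psi(x)=\sup_y\{\psi(y)-c(x,y)\}$ and $\psi$ is $c$-convex. This argument is trivial once the orientation of $c$ is tracked; getting the direction of the arguments of $c$ consistent is the only real hazard here.

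For (ii), take $\psi$ $c$-convex. By (i), $-\psi\in\mathcal{L}_1(E,c)$, i.e. $\psi(y)-\psi(x)\le c(x,y)$ for all $x,y$. Taking $y=x$ in the definition of $\psi^c$ gives $\psi^c(x)\le\psi(x)+c(x,x)=\psi(x)$. For the reverse inequality I need $\psi(y)+c(y,x)\ge\psi(x)$ for all $y$, i.e. $\psi(x)-\psi(y)\le c(y,x)$, and this is exactly the Lipschitz property of $-\psi$ with the pair $(y,x)$. So the infimum equals $\psi(x)$, attained at $y=x$.

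I expect the main obstacle to be checking that the conventions in Definition~\ref{c-convex} (which argument of $c$ appears in the supremum versus the infimum) line up. If they do not, the fix is to use the dual quasi-pseudo-metric $d^-(x,y)=c(y,x)$ in the corresponding step.

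For (iii), let $\psi\in\mathcal{L}_1(E,c)$ and $x\le_c y$, so that $c(x,y)=0$. Then $\psi(x)-\psi(y)\le c(x,y)=0$, hence $\psi(x)\le\psi(y)$ and $\psi$ is increasing with respect to $\le_c$.
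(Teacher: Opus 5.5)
Your proposal is correct and matches the paper's proof step by step: the triangle inequality $c(x',y)\le c(x',x)+c(x,y)$ gives the forward direction of (i), the choice $\zeta=\psi$ with $c(x,x)=0$ gives the converse, the two-sided bound on $\psi^c$ gives (ii), and $c(x,y)=0$ gives (iii). The only difference is cosmetic, since in (i) you take the supremum of a pointwise inequality where the paper bounds a difference of suprema. As your own computation confirms, the conventions in Definition~\ref{c-convex} line up, so the fallback to the dual quasi-pseudo-metric is never needed.
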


\begin{proof}
\begin{enumerate}[label=(\roman*)]
\item As $c$ is a quasi-pseudo-metric, we have $c(z,y) \le c(z,x) + c(x,y)$ for all $x,y,z\in E$, and therefore
$$
c(z,y)-c(x,y) \le c(z,x).
$$
For any $c$-convex function $\psi$ and any $x,z\in E$ we get
\begin{eqnarray*}
\psi(x) - \psi(z) && = \sup \{ \zeta(y)-c(x,y) :y \in E \} - \sup \{ \zeta(y)-c(z,y) :y \in E \}\\
&& \le \sup\{ c(z,y) - c(x,y) :y \in E \} \ \le \ c(z,x)
\end{eqnarray*}
and thus $-\psi \in  \mathcal{L}_1(E,c)$. 

Vice versa, assume that $-\psi \in  \mathcal{L}_1(E,c)$, i.e. $\psi(y)-\psi(x) \le c(x,y)$ for all $x,y\in E$ and hence
$$
\psi(x) \ge \sup\{\psi(y) - c(x,y): y \in E\}.
$$
As $c(x,x) = 0$, we also have 
$
\psi(x) = \psi(x)  - c(x,x) \le \sup\{\psi(y) - c(x,y): y \in E\}
$
and hence $\psi(x) = \sup\{\psi(y) - c(x,y): y \in E\}$, i.e. $\psi$ is $c$-convex. 

\item  If $\psi$ is $c$-convex, then $-\psi \in \mathcal{L}_1$ and hence $\psi(x) \le \psi(y) + c(y,x)$ for all $x,y \in E$. Therefore
$
\psi(x) \le \inf\{\psi(y) + c(y,x): y \in E\} = \psi^c(x).
$
On the other hand
$
\psi^c(x) = \inf\{\psi(y) + c(y,x): y \in E\} \le \psi(x) + c(x,x) = \psi(x),
$
hence $\psi^c = \psi$. 

\item Recall from Definition \ref{def:order-relation} that we have $x \le_c y$, iff $c(x,y) = 0$. Thus, for any $\psi \in \mathcal{L}_1(E,c)$, $x \le_c y$ implies
$
\psi(x) - \psi(y) \le c(x,y) = 0
$
and hence $\psi(x) \le \psi(y)$. 
\end{enumerate}
\end{proof}

We can now state the following generalization of the Kantorovich-Rubinstein formula. 

\begin{lemma} \label{lem:kantorovich-rubinstein-qpm}
For any $E$-valued random variables $X,Y$ such that
$$
\E[(c(X,x_0)+c(x_0,X)+c(Y,x_0)+c(x_0,Y))]<\infty
$$
for some $x_0\in E$, we have
$$
\mathrm{OT}_c(X,Y)  = \sup \{ \E[f(X)-f(Y)]: f \in \mathcal{L}_1(E,c) \}.
$$
\end{lemma}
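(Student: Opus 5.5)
We prove the two inequalities separately. The easy direction is ``$\ge$''. Fix $f\in\mathcal{L}_1(E,c)$ and any coupling $(\tilde X,\tilde Y)$ of $X$ and $Y$. Pointwise $f(\tilde X)-f(\tilde Y)\le c(\tilde X,\tilde Y)$, so $\E[f(X)-f(Y)]\le \E[c(\tilde X,\tilde Y)]$. Taking the infimum over couplings gives $\E[f(X)-f(Y)]\le\mathrm{OT}_c(X,Y)$.

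This step needs the expectations $\E[f(X)]$ and $\E[f(Y)]$ to be well defined. For $f\in\mathcal{L}_1(E,c)$ we have
$$
-c(x_0,x)\le f(x)-f(x_0)\le c(x,x_0),
$$
so $|f(x)-f(x_0)|\le c(x,x_0)+c(x_0,x)$. The integrability assumption on $X,Y$ with the fixed $x_0$ then makes $f(X)$ and $f(Y)$ integrable. It also shows $\mathrm{OT}_c(X,Y)<\infty$, via the coupling by independence and the triangle inequality $c(x,y)\le c(x,x_0)+c(x_0,y)$.

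For ``$\le$'' we invoke the general Kantorovich duality theorem for nonnegative lower semicontinuous costs on Polish spaces (Villani, Theorem 5.10). Since $c$ is lower semicontinuous and nonnegative, and the cost is dominated by $c(x,y)\le a(x)+b(y)$ with $a(x)=c(x,x_0)$ and $b(y)=c(x_0,y)$ both integrable, it gives
$$
\mathrm{OT}_c(X,Y)=\sup\{\E[\psi(X)]-\E[\phi(Y)]\}.
$$
Here the supremum is over pairs with $\psi(x)-\phi(y)\le c(x,y)$, and it may be restricted to pairs $(\psi,\psi^{c})$ with $\psi$ $c$-convex in the appropriate sense. With the sign conventions of Definition~\ref{c-convex}, we rewrite the dual as $\E[\phi(X)]-\E[\zeta(Y)]$ with $\phi(x)\le \zeta(y)+c(x,y)$. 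The best choice is then $\phi(x)=\inf_y\{\zeta(y)+c(x,y)\}$.

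The key step is to show that for a quasi-pseudo-metric the optimal potentials collapse to a single function $f\in\mathcal{L}_1(E,c)$ used on both sides. Two observations give this.

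First, the $c$-transform $\phi(x)=\inf_y\{\zeta(y)+c(x,y)\}$ satisfies $\phi(x)-\phi(x')\le c(x,x')$ by the triangle inequality. This is the computation in Lemma~\ref{prop-c-convex}(i) with signs flipped, so $\phi\in\mathcal{L}_1(E,c)$.

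Second, for $\phi\in\mathcal{L}_1(E,c)$, the best second potential $\inf\{\zeta:\phi(x)-\zeta(y)\le c(x,y)\ \forall x\}$ is $\zeta(y)=\sup_x\{\phi(x)-c(x,y)\}$. This equals $\phi(y)$: it is at least $\phi(y)$ by taking $x=y$ and using $c(y,y)=0$, and at most $\phi(y)$ by the Lipschitz property. This is essentially Lemma~\ref{prop-c-convex}(ii).

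Hence every admissible dual pair $(\phi,\zeta)$ is dominated in value by $(f,f)$ with $f=\phi^{\text{transform}}\in\mathcal{L}_1(E,c)$. Therefore the dual supremum is at most $\sup_f\E[f(X)-f(Y)]$, which closes the argument.

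The main obstacle is bookkeeping for measurability and integrability in the duality theorem. The transforms are infima or suprema over uncountable families and need not be Borel a priori, and the potentials may take the value $\pm\infty$. I would handle this in two ways:

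\begin{itemize}
\item Use the version of Kantorovich duality in which the dual supremum may be taken over bounded continuous (or $L^1$) pairs $(\psi,\phi)$. Then the transform $\phi^{c}$ is upper or lower semicontinuous as an inf or sup of semicontinuous functions in one variable, hence Borel.
\item Note that $\phi$ is finite because $-c(x_0,x)+\text{const}$ bounds it below and $\zeta$ is bounded. Integrability of $f(X)$ and $f(Y)$ then follows from the first paragraph.
\end{itemize}

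Each replacement step only increases the dual value, since $\phi^c\ge\psi$ pointwise and so on. Combining the two directions gives the claimed equality.
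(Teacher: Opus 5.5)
Your proof is correct and is essentially the paper's argument: Kantorovich duality (Villani, Theorem 5.10), followed by the observation that for a quasi-pseudo-metric the $c$-transform lands in $\mathcal{L}_1(E,c)$ and is its own conjugate, which is exactly Lemma~\ref{prop-c-convex}(i)--(ii); you only add the explicit weak-duality direction and the integrability bound $|f(x)-f(x_0)|\le c(x,x_0)+c(x_0,x)$, which the paper leaves implicit. The one slip is your measurability remark: an infimum of lower semicontinuous functions need not be semicontinuous (in general it is only lower semianalytic, hence universally measurable), but you can avoid the issue as the paper does, since Theorem 5.10 already lets you restrict the dual to integrable $c$-convex $\psi$ paired with $\psi^c$, and then $\psi^c=\psi$.
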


\begin{proof}
It follows from the well-known Kantorovich duality theorem (see e.g.~\cite[Theorem 5.10]{villani2009}) that 
$$
\mathrm{OT}_c(X,Y)  = \sup \{ \E[\psi^c(Y)- \psi(X)]: \psi \mbox{ is $c$-convex} \}.
$$
As we have shown in Lemma \ref{prop-c-convex}, a function $\psi$ is $c$-convex, if and only if $-\psi \in \mathcal{L}_1(E,c)$, and then $\psi = \psi^c$. This immediately implies the result. 
\end{proof}

\begin{example}
On any partially ordered Polish space $E$ equipped with a closed partial order $\preceq$ we can define the canonical quasi-pseudo-metric $c(x,y) := 1_{[x \not\preceq y]}$ which obviously has the property that $c(x,y) = 0$ if and only if $x \preceq y$. In that case a function $f$ belongs to $\mathcal{L}_1(E,c)$ if and only if $f$ is increasing and $\operatorname{span}(f) := \sup f - \inf f \le 1$. This has already been observed by \cite{KamSta:ORL2020} and they define a \textit{degree of stochastic dominance} $\tau(X,Y) := 1- \mathrm{OT}_c(X,Y)$ with the property that $X \le_{st} Y$ if and only if $\tau(X,Y)=1$ and $\tau(X,Y) < 1$ otherwise. They show that this measure has the following very strong robustness property: if $X \le_{st} Y$ and $X_\gamma,Y_\gamma$ are random variables with
$$
\mathrm{Law}(X_\gamma) = \gamma \mathrm{Law}(X) + (1-\gamma) \mathrm{Law}(X_1), \mbox{ and } \mathrm{Law}(Y_\gamma) = \gamma \mathrm{Law}(Y) + (1-\gamma) \mathrm{Law}(Y_1)
$$
for completely arbitrary $E$-valued random variables $X_1,Y_1$, then $\tau(X_\gamma,Y_\gamma) \ge \gamma$. They call this \textit{partial stochastic dominance}. However, if we define a relation $X \le^\tau_{\gamma,g} Y$ if $\tau(X,Y) \ge \gamma$ then this does not yield an order relation.  Therefore we consider here other variants that lead to order relations.
\end{example}

\begin{proof}[Proof of Theorem \ref{thm:main2}]
By Lemma \ref{lem:kantorovich-rubinstein-qpm} we have
$$
\mathrm{OT}_c(X,Y) = \sup \{\E[f(X)-f(Y)]: f\in \mathcal{L}_1(E,c)\}.
$$
Thus an application of Lemma \ref{lem:F} with $\mathcal{F}=\mathcal{L}_1(E,c)$ yields
\begin{equation*}
X \le_{\gamma,g} Y \qquad \Longleftrightarrow \qquad  \E[f(X)] \le \E[f(Y)] \quad \mbox{ for all } f \in \mathcal{F}_{\gamma,g}\cup\{g\},
\end{equation*}
where
$$
 \mathcal{F}_{\gamma,g} := \{ (1-\gamma) f + \gamma g: f \in \mathcal{L}_1(E,c) \}.
$$
\end{proof}

\subsection{Proof of Theorem \ref{thm:main3}}

\begin{proof}[Proof of Theorem \ref{thm:main3}]
The claim is obvious for $\gamma=1$, so we assume $\gamma<1$ without loss of generality.
Let us define $\delta := \E[g(Y)]-\E[g(X)]>0$. Then 
$$
 \gamma^\ast(X,Y;g)=\frac{\mathrm{OT}_c(X,Y)}{\E[g(Y)]-\E[g(X)] + \mathrm{OT}_c(X,Y)} \le \gamma
$$
if and only if $\mathrm{OT}_c(X,Y) \le \gamma\delta/(1-\gamma)$. Lemma \ref{lem:triangle} implies 
$$
\mathrm{OT}_c(X,\tilde Y) \le \mathrm{OT}_c(X,Y) + \mathrm{OT}_c(Y,\tilde Y)
\le  \frac{\gamma\delta}{1-\gamma} + \mathrm{OT}_d(Y,\tilde Y) \le  \frac{\gamma\delta}{1-\gamma} + \varepsilon.
$$
By assumption we have 
\begin{align*}
\E[g(\tilde Y)]-\E[g(X)] = \E[g(Y)]-\E[g(X)]  + \E[g(\tilde Y)]-\E[g(Y)]  \ge \delta - \varepsilon.
\end{align*}
Therefore
\begin{eqnarray*}
 \gamma^\ast(X, \tilde Y;g) &&=\frac{\mathrm{OT}_c(X,\tilde Y)}{\E[g(\tilde Y)]-\E[g(X)] + \mathrm{OT}_c(X,\tilde Y)} \\
 && \le  \frac{\gamma\delta/(1-\gamma) + \varepsilon}{ \delta - \varepsilon + \gamma\delta/(1-\gamma) + \varepsilon}\\
&& =  \frac{\gamma\delta/(1-\gamma) + \varepsilon}{ \delta + \gamma\delta/(1-\gamma)} =\frac{\gamma+(1-\gamma)\varepsilon/\delta}{(1-\gamma)+\gamma} =: \tilde{\gamma},
\end{eqnarray*}
and note that
$$
\tilde{\gamma} = \gamma + \frac{(1-\gamma)\varepsilon}{\delta} =  \gamma + \frac{(1-\gamma)\varepsilon}{\E[g(Y)]-\E[g(X)]}.
$$
In the case $\mathrm{OT}_d(X,\tilde X) \le \varepsilon$ we get 
\begin{align*}
\mathrm{OT}_c(\tilde X,Y) &\le  \mathrm{OT}_c(X,Y)+ \mathrm{OT}_c(\tilde X,X)\\
&\le    \mathrm{OT}_c(X,Y)+ \mathrm{OT}_d(\tilde X,X)   \le \ \mathrm{OT}_c(X,Y)+ \varepsilon, 
\end{align*}
and the condition on $g$ gives
\[
\E[g(Y)]-\E[g(\tilde X)]
=\delta+\E[g(X)]-\E[g(\tilde X)]\ge \delta-\varepsilon.
\]
Therefore the second result follows from the same arguments. 
\end{proof}

\subsection{Proof of Theorem \ref{thm:Rd}}

\begin{proof}[Proof of Theorem \ref{thm:Rd}]
By Theorem \ref{thm:main2} we have 
\begin{align*}
\gamma^\ast(\mathbf{X}, \mathbf{Y};g) \le \gamma\qquad \Longleftrightarrow \qquad \E[f(\mathbf{X})] \le \E[f(\mathbf{Y})] \quad \text{ for all } f\in \mathcal{F}_{\gamma,g}\cup\{g\},
\end{align*}
where $\mathcal{F}_{\gamma,g}:=\{(1-\gamma) f + \gamma g: f(\x)-f(\y)\le c(\x,\y) \, \forall \x,\y\in \R^d\}$. Thus it is sufficient to show that 
\begin{align*}
\E[f(\mathbf{X})] \le \E[f(\mathbf{Y})] \quad \text{ for all } f\in \mathcal{F}_{\gamma,g} \qquad \Longleftrightarrow\qquad \E[u(\mathbf{X})]\le \E[u(\mathbf{Y})] \quad  \forall u\in \mathcal{U}_\gamma.
\end{align*}
For this we take $u\in \mathcal{U}_\gamma$.
Using the fundamental theorem of calculus along the line segment from $\mathbf{y}$ to $\mathbf{x}$, we have
\begin{eqnarray*}
u(\mathbf{x}) - u(\mathbf{y}) && = \int_0^1 \sum_{i=1}^d (x_i - y_i)\frac{\partial u}{\partial x_i}(\mathbf{y}+t(\mathbf{x}-\mathbf{y}))\,dt\\
&& \le \sum_{i=1}^d [(x_i - y_i)_+ -\gamma (x_i - y_i)_-],  
\end{eqnarray*}
so that 
\begin{align*}
&(u(\x)-\gamma g(\x)) -(u(\y)-\gamma g(\y))\\
&\le \sum_{i=1}^d [(x_i - y_i)_+ -\gamma (x_i - y_i)_-]
-\gamma\sum_{i=1}^d [(x_i - y_i)_+ - (x_i - y_i)_-]\\
&= (1-\gamma) c(\x,\y).
\end{align*}
For $\gamma<1$ this shows that
\[
(u-\gamma g)/(1-\gamma)\in \mathcal{L}_1(E,c),
\]
and hence $u\in \mathcal{F}_{\gamma,g}$.
For $\gamma=1$, the same inequality shows that $u-g$ is constant, so expectation inequalities for $u$ and $g$ are equivalent.
On the other hand, write $f_{\gamma,g}=(1-\gamma)h+\gamma g$ with $h\in \mathcal{L}_1(E,c)$. Then
\[
0\le h(\x+\lambda \mathbf{e}^i)-h(\x)\le \lambda
\quad \forall \lambda>0,\ \x\in\R^d,
\]
where $\mathbf{e}^i=(0,\dots,0,1,0,\dots,0)$ is the $i$th unit vector. Hence $h$ is increasing in each coordinate and 1-Lipschitz with respect to $d$. To show that we can assume without loss of generality $h$ to be differentiable we use the idea of \cite{denuit2002}. Let $h_n$ be standard smooth mollifications of $h$. Then $h_n\to h$ locally uniformly, the functions $h_n$ have a common linear-growth bound, and $0\le \frac{\partial h_n}{\partial x_i}\le 1$ for all $i$. Thus $f_n:=(1-\gamma)h_n+\gamma g\in \mathcal{U}_\gamma$ and, since $\mathbf{X},\mathbf{Y}\in \mathcal{L}_d(\R^d)$,
\[
\E[f_n(\mathbf{X})]\to \E[f_{\gamma,g}(\mathbf{X})]\quad\text{and}\quad \E[f_n(\mathbf{Y})]\to \E[f_{\gamma,g}(\mathbf{Y})].
\]
This proves the reverse implication.
\end{proof}

\subsection{Proof of Corollary \ref{thm:robust-gamma}}

\begin{proof}[Proof of Corollary \ref{thm:robust-gamma}] Assume that $0 \le \gamma < \gamma' \le 1$, $\mathbf{X} \le_\gamma \mathbf{Y}$ and  $\delta := \E[g(\mathbf{Y})] -  \E[g(\mathbf{X})] > 0$. It follows from H\"older's inequality that $\|\mathbf{x}\|_1 \le \sqrt{d}  \|\mathbf{x}\|_2$ for all $\mathbf{x} \in \mathbb{R}^d$ and this inequality carries over to the corresponding Wasserstein distances based on these norms, so that 
$
OT_d(\mathbf{X},\hat{\mathbf{X}})   \le \sqrt{d}  \cdot  W_1(\mathbf{X},\hat{\mathbf{X}})
$.
Therefore we can deduce from Theorem \ref{thm:main3} and Corollary \ref{cor:Rd-specialization} that in case $W_1(\mathbf{X},\hat{\mathbf{X}}) \le \varepsilon$ it holds  $\tilde{\mathbf{X}} \le_{\bar{\gamma}} \mathbf{Y}$ for
$$
\bar{\gamma} = \gamma + (1-\gamma) c , 
$$
where $c = \sqrt{d}\varepsilon/\delta$.  If in addition $W_1(\mathbf{Y},\hat{\mathbf{Y}}) \le \varepsilon$, we get  $\hat{\mathbf{X}} \le_{\hat{\gamma}} \hat{\mathbf{Y}}$ for
$$
\hat{\gamma}= \bar{\gamma} +  (1-\bar{\gamma}) c = \gamma + (1-\gamma) (2c-c^2).
$$
If $\varepsilon$ fulfils \eqref{eps-robust}, we obtain $\hat{\gamma} \le \gamma + 2c (1-\gamma)  \le \gamma'$ and therefore $\hat{\mathbf{X}} \le_{\gamma'} \hat{\mathbf{Y}}$.
\end{proof}

{\large\bf Acknowledgement} 

We kindly thank Ruslan Mirmominov for the implementation of the examples in section 4. Johannes Wiesel would like to thank NNF
and Villum Fonden for their support.

\bibliographystyle{apalike}
\bibliography{asd-transport}
	
\appendix

\section{Appendix}

\begin{example}[A non-transitive Wasserstein-ratio criterion]\label{ex:wasserstein-nontransitive} 
The optimal-transport approach of \cite{delbarrio2018a} measures the deviation from first-order stochastic dominance by the fraction of the squared Wasserstein cost that is spent on order violations. In the univariate notation used here, for random variables $X,Y$ with finite second moments, this coefficient is
\[
\varepsilon_{W_2}(X,Y):=
\frac{\mathrm{OT}_{c_+}(X,Y) }
{\mathrm{OT}_c(X,Y) },
\]
where $c(x,y) = (x-y)^2$ and $c_+(x,y) = (x-y)_+^2$. 
and one declares $X \le^B_\varepsilon Y$ if $\varepsilon_{W_2}(X,Y)\le \varepsilon$. This is the one-dimensional squared-cost instance of the transport-ratio idea also used in \cite{rioux2024} in a multivariate context, where one compares a violation transport cost with a total transport cost. The following example shows that thresholding such a ratio need not define a transitive relation. Consider the following random variables with discrete distributions. 
\begin{align*}
X & \sim  \frac{1}{5}(\delta_{10}+\delta_{20}+\delta_{30}+\delta_{40}+\delta_{50}),\\
Y& \sim \frac{1}{100}\delta_9+\frac{19}{100}\delta_{10}
      +\frac{1}{5}(\delta_{20}+\delta_{30}+\delta_{40})
      +\frac{1}{10}(\delta_{50}+\delta_{51}),\\
Z & \sim \frac{1}{100}\delta_8+\frac{19}{100}\delta_{10}
      +\frac{1}{5}(\delta_{20}+\delta_{30})
      +\frac{1}{10}(\delta_{40}+\delta_{41}+\delta_{50}+\delta_{51}).
\end{align*}
A direct calculation gives
\[
\varepsilon_{W_2}(X,Y)=\varepsilon_{W_2}(Y,Z)=\frac{1}{11},
\qquad
\varepsilon_{W_2}(X,Z)=\frac{1}{6}.
\]
Thus, for $\varepsilon=1/10$, we have $ X\le^B_\varepsilon Y$ and $Y \le^B_\varepsilon Z $, but $X \not\le^B_\varepsilon Z$. Hence this threshold relation is not transitive and therefore does not define an order relation.
\end{example}

\begin{example}\label{ex:schaefer}
Assume that we are given a Banach space $B$ with given norm $\|\cdot\|$ and a partial order $\le$ which is characterized by a closed convex cone $C$ such that $x \le y$ if and only if $y-x \in C$. We assume that the Banach space is a Banach lattice, which means that the ordered space $(B, \le)$ is a lattice with lattice operations $x \vee y = \sup\{x,y\}$ and $x \wedge y = \inf\{x,y\}$ with the additional property that $|x| \le |y|$ implies $\|x\| \le \|y\|$, where $|x| := x \vee (-x)$. In this case $d^+(x,y) := \|(x-y) \vee 0\|$ defines a quasi-pseudo-metric, as $|(x-z) \vee 0| \le |(x-y) \vee 0| + |(y-z) \vee 0|$ and therefore
$$
d^+(x,z) = \|(x-z) \vee 0\| \le \|(x-y) \vee 0\| + \|(y-z) \vee 0\| = d^+(x,y) + d^+(y,z)
$$
for all $x,y,z \in B$. As an $L^p$-space on any measure space $(S, \Sigma, \mu)$ equipped with the usual pointwise ordering is a Banach lattice, we get the result that for any functions $f,g \in L^p(\mu)$ 
$$
d_p^+(f,g) = \left(\int \sup\{f(s)-g(s),0\}^p \mu(ds)\right)^{1/p}
$$
is a quasi-pseudo-metric for any $p \ge 1$. The relation to the classical $L^p$-distance 
$$
d_p(f,g) = \left(\int |f(s)-g(s)|^p \mu(ds)\right)^{1/p}
$$
is given by
$$
d_p(f,g)^p = d_p^+(f,g)^p + d_p^-(f,g)^p,
$$
so that we only get the splitting identity $d_p(x,y) = d_p^+(x,y) + d_p^-(x,y) = \bar{d}(x,y)$ for the special case $p = 1$. For $p > 1$ we only obtain
$$
d_p(f,g) \le d_p^+(f,g) + d_p^-(f,g).
$$
In the special case that $S$ is finite and $\mu$ is the counting measure we obtain the quasi-pseudo-metric
$$
d_p^+(\mathbf{x},\mathbf{y}) = \left(\sum_{i=1}^d (x_i - y_i)_+^p \right)^{1/p}
$$
for Euclidean vectors $\mathbf{x}, \mathbf{y} \in \mathbb{R}^d$. For some results in this paper like Theorem \ref{thm:main3}  it is advantageous to have the splitting identity. Therefore choosing $p = 1$ yields more interesting results in our context compared to e.g. the more often used classical $L^2$- distance.  
\end{example}

\end{document}